\title{K\"{u}nneth formulas for Cotor.}
\author{A. Salch}
\begin{document}
\maketitle
\begin{abstract}
We investigate the question of how to compute the cotensor product, and more generally the derived cotensor (i.e., Cotor) groups, of a tensor product of comodules. In particular, we determine the conditions under which there is a K\"{u}nneth formula for Cotor. We show that there is a simple K\"{u}nneth theorem for Cotor groups if and only if an appropriate coefficient comodule has trivial coaction. This result is an application of a spectral sequence we construct for computing Cotor of a tensor product of comodules. Finally, for certain families of nontrivial comodules which are especially topologically natural, we work out necessary and sufficient conditions for the existence of a K\"{u}nneth formula for the $0$th Cotor group, i.e., the cotensor product. We give topological applications in the form of consequences for the $E_2$-term of the Adams spectral sequence of a smash product of spectra, and the Hurewicz image of a smash product of spectra.
\end{abstract}
\tableofcontents

\section{Introduction.}

The classical, well-known K\"{u}nneth formula expresses 
\begin{itemize}
\item the homology of a tensor product of chain complexes of abelian groups\footnote{Or, more generally, of chain complexes of modules over any commutative ring $R$, if one is willing to consider the K\"{u}nneth spectral sequence.} in terms of the (derived) tensor product of the homology of each chain complex,
\item and, closely related, the homology of a Cartesian product of topological spaces in terms of the (derived) tensor product of the homology of each space.
\item The stable-homotopical version of the previous example: the homology of a smash product of spectra is described in terms of the (derived) tensor product of the homology of each spectrum.
\end{itemize}
More generally, in any setting in which we have some notion of cohomology, and some kind of tensor product, one expects that a ``K\"{u}nneth formula'' in that setting ought to be some expression for the cohomology of a tensor product in terms of the (derived) tensor product of the cohomology of each factor.

Now suppose that $\Gamma$ is a bialgebra, or more generally, a bialgebroid, over some ground ring $A$. In the setting of $\Gamma$-comodules, the usual notion of cohomology is given by $\Cotor_{\Gamma}^*(A,-)$, the derived functors of the cotensor product $\Cotor_{\Gamma}^0(A,-) \cong A\Box_{\Gamma}-$. Using the multiplication on $\Gamma$, we get a tensor product $\otimes_A$ on the category of $\Gamma$-comodules. Consequently we would like to know if there exists a well-behaved K\"{u}nneth formula for the cotensor product, or more generally for $\Cotor$. There is apparently no place in the existing literature where such a K\"{u}nneth formula is considered. The purpose of this paper is to fill that ``hole'' in the literature.

Before explaining the main results of this paper, we pause to explain why it is both reasonable and unreasonable to expect some kind of K\"{u}nneth formula in $\Cotor$. Given an algebraic group $G$ over some field $k$, the representations of $G$ (taken in the broad sense, with no requirement of finite-dimensionality) are equivalent to the comodules over the representing Hopf algebra $kG$. The $\Cotor$-groups of the comodules recover the cohomology groups of the representations. Of course it is unrealistic to expect a simple relationship between $H^*(G; \rho_1),H^*(G; \rho_2),$ and $H^*(G; \rho_1\otimes_k \rho_2)$ for arbitrary representations $\rho_1$ and $\rho_2$! From this class of examples, it is clear that there cannot be a straightforward K\"{u}nneth formula in $\Cotor$ without some kind of restrictive hypotheses on the bialgebroid or on the comodules involved.

On the other hand, consider the occurrences of $\Cotor$ in algebraic topology. This is a paper in pure algebra, but motivated by topological questions, so we hope the reader will forgive a digression into topology. 
Given a generalized homology theory $E_*$ satisfying standard conditions\footnote{Standard references here are \cite{MR1324104} and appendix 1 of \cite{MR860042}.}, the ring $E_*E$ of stable cooperations on $E_*$-homology forms a bialgebroid $(E_*,E_*E)$. For any spectrum $X$ we get the {\em generalized Adams spectral sequence}
\begin{equation*}
E_2^{*,*}  \cong \Cotor^{*,*}_{E_*E}\left( E_*,E_*(X)\right) \Rightarrow \pi_*\left( \hat{X}_E\right).\end{equation*}
In the particular case when $E$ is classical mod $p$ homology, we have $E_* \cong \mathbb{F}_p$ concentrated in degree zero, and we have $E_*E \cong A_*$, the mod $p$ dual Steenrod algebra, a commutative bialgebra over $\mathbb{F}_p$. When $X$ is a CW-complex with finitely many cells in each dimension, the spectral sequence then takes the form
\begin{equation*}
E_2^{*,*}  \cong \Cotor^{*,*}_{A_*}\left( \mathbb{F}_p,H_*(X;\mathbb{F}_p)\right) \Rightarrow \left(\pi^{st}_*(X)\right)^{\hat{}}_p,\end{equation*}
where $\left(\pi^{st}_*(X)\right)^{\hat{}}_p$ is the $p$-adic completion of the stable homotopy groups of $X$, i.e., $\left(\pi^{st}_*(X)\right)^{\hat{}}_p \cong \lim_{n\rightarrow\infty} \pi^{st}_*(X)/p^n\pi^{st}_*(X)$. 

Now suppose $X$ and $Y$ are spectra, and write $\pi_*^{st}(X\smash Y)^{\hat{}}_p$ for the $p$-adically completed stable homotopy groups of the smash product $X\smash Y$.
The image of the Hurewicz homomorphism 
\[ \pi_*^{st}(X\smash Y)^{\hat{}}_p\rightarrow H_*(X\smash Y; \mathbb{F}_p) \cong H_*(X;\mathbb{F}_p)\otimes_{\mathbb{F}_p}H_*( Y; \mathbb{F}_p)\]
is precisely the $0$-line in the Adams $E_{\infty}$-page, i.e., those elements in \[ \mathbb{F}_p\Box_{A_*}\left(H_*(X;\mathbb{F}_p)\otimes_{\mathbb{F}_p}H_*( Y; \mathbb{F}_p)\right) \]
which survive the Adams spectral sequence.

Mod $p$ homology {\em does} satisfy a K\"{u}nneth formula. The question of whether the {\em Hurewicz image in mod $p$ homology} satisfies a K\"{u}nneth formula is nearly\footnote{``Nearly'' here means ``up to Adams spectral sequence differentials originating on the $0$-line.''} the same question as asking whether $\Cotor^0$, i.e., the cotensor product, satisfies the K\"{u}nneth formula 
\[ \mathbb{F}_p\Box_{A_*}\left(H_*(X;\mathbb{F}_p)\otimes_{\mathbb{F}_p}H_*( Y; \mathbb{F}_p)\right) \cong \left( \mathbb{F}_p\Box_{A_*} H_*(X;\mathbb{F}_p)\right) \otimes_{\mathbb{F}_p}\left( \mathbb{F}_p\Box_{A_*}H_*( Y; \mathbb{F}_p)\right).
\]
An unbridled optimist might even hope for some kind of K\"{u}nneth theorem in higher $\Cotor$-groups as well, yielding a way to express the entire Adams $E_2$-page \[ \Cotor_{A_*}^{*,*}\left(\mathbb{F}_p, H_*(X;\mathbb{F}_p)\otimes_{\mathbb{F}_p} H_*(Y;\mathbb{F}_p)\right)\] for $X\smash Y$ in terms of the Adams $E_2$-pages for $X$ and for $Y$. 

The topological considerations make it seems plausible that a K\"{u}nneth formula could exist in $\Cotor^0$, or perhaps $\Cotor^*$, at least under appropriate hypotheses. On the other hand, the representation-theoretic considerations make it seem implausible. We hope the reader is now interested in knowing how the story turns out, which is the subject of this paper.


Our findings are outlined below:
\begin{itemize}
\item \Cref{Basic ideas...} reviews basic ideas about categories of comodules. The main result is a simple observation, Proposition \ref{A-linear cotensor}, which establishes that we cannot possibly have a K\"{u}nneth formula for comodules over a bialgebroid $\Gamma$ unless the left and right unit maps $\eta_L,\eta_R$ of $\Gamma$ are equal to one another, i.e., the bialgebroid $\Gamma$ is in fact a bialgebr{\em a}. Consequently we restrict our attention to comodules over bialgebras for the rest of the paper.
\item \Cref{The case when one comodule...} reviews the case where one comodule is trivial, i.e., all of its elements are primitive. This is the easiest case, and the results in \cref{The case when one comodule...} essentially all follow from a lemma that, as far as the author knows, first appeared in a 2002 paper of Al-Takhman, \cite{MR1910822}. The main result in \cref{The case when one comodule...} is Corollary \ref{al-takhman cor}, which establishes that, when $N$ is a {\em trivial} $\Gamma$-comodule which is flat over the ground ring $A$, we have isomorphisms 
\begin{equation}\label{kunneth thm 40949}\Cotor_{\Gamma}^n(L,M\otimes_A N) \cong \Cotor_{\Gamma}^n(L,M)\otimes_A N \cong \Cotor_{\Gamma}^n(L,M)\otimes_A (A\Box_{\Gamma} N)\end{equation}
for all $n$ and $L$.
\item Given that isomorphism \eqref{kunneth thm 40949} is a reasonable K\"{u}nneth-like theorem whenever $N$ is a trivial comodule, we can approach the situation when $M,N$ are each nontrivial by filtering $N$ so that its filtration quotients are trivial.
In \cref{The primitive filtration...} we carry out that idea. Specifically, in Definition-Proposition \ref{main defs}, we define a canonical such filtration, the ``primitive filtration,'' and if that filtration of a given comodule $N$ is exhaustive, then we get a canonical grading on $N$. In Proposition \ref{conn and exhaustive prim filt} we show that, when $\Gamma$ is a connected graded bialgebra, all bounded-below graded $\Gamma$-comodules have exhaustive primitive filtration; since the dual Steenrod algebra is connected and graded, the primitive filtration is exhaustive in the topological examples which motivate the investigations in this paper.

The result is a spectral sequence
\begin{align}
\label{kunneth ss 04000} E_1^{s,t} \cong \Cotor^s_{\Gamma}(L,M)\otimes_A N^t &\Rightarrow \Cotor^s_{\Gamma}(L,M\otimes_A N),
\end{align}
constructed in Theorem \ref{cotor sseq}. See that theorem for the necessary conditions for the existence of this spectral sequence, as well as the definition of the relevant grading on $N$ in the description of the $E_1$-page. Among several consequences of the existence of this spectral sequence, the most important is Corollary \ref{converse to al-takhman lemma}: if the canonical map
\begin{equation*}
(L\Box_{\Gamma}M)\otimes_A (A\Box_{\Gamma}N) \rightarrow  L\Box_{\Gamma}(M\otimes_AN)\end{equation*} 
is an isomorphism, and if the canonical map 
\begin{equation*}
\Cotor_{\Gamma}^1(L,M)\otimes_A (A\Box_{\Gamma}N) \rightarrow  \Cotor_{\Gamma}^1(L,M\otimes_AN)\end{equation*} is also injective, then $N$ has trivial coaction. This is a negative result: it tells us that, unless one of the tensor factors is a comodule with trivial coaction, {\em we cannot expect any reasonable kind of K\"{u}nneth formula for both $\Cotor^0$ and $\Cotor^1$.} 
See Corollary \ref{converse to al-takhman lemma} for the mild hypotheses necessary for this result. 
\item In \cref{Example calculations...} we run spectral sequence \eqref{kunneth ss 04000} very explicitly in a family of examples where there are nontrivial differentials of several lengths. In particular, the spectral sequence calculations in \cref{Example calculations...} demonstrate that spectral sequence \eqref{kunneth ss 04000} does not necessarily collapse at $E_1$ or $E_2$, and is indeed capable of having arbitrarily long nonzero differentials.
\item \Cref{Kunneth formulas for...} takes up the question of when we have a K\"{u}nneth theorem for the cotensor product, i.e., $\Cotor^0$. In light of the negative result of Corollary \ref{converse to al-takhman lemma}, a K\"{u}nneth theorem for $\Cotor^0$ is as much as can be hoped for, unless one of the comodules in question is trivial. Theorem \ref{kunneth quot prop} is the main result in \cref{Kunneth formulas for...}: it establishes that, when $M$ is a subcomodule of $\Gamma$, we have a K\"{u}nneth isomorphism $(A\Box_{\Gamma}M)\otimes_A (A\Box_{\Gamma}N) \rightarrow A\Box_{\Gamma}(M\otimes_AN)$ if and only if, whenever $n\in N$ satisfies $\psi_N(n)\in M\otimes_A N\subseteq \Gamma\otimes_A N$, then $n$ is primitive. Here $\psi_N: N\rightarrow \Gamma\otimes_A N$ is the coaction map of the comodule $N$. This result is generalized by Theorem \ref{kunneth quot prop 2} and its corollaries, which allow $M$ to be a subcomodule of a finite (or, in the graded case, finite-type) direct sum of copies of $\Gamma$. 
\item Finally, in \cref{Topological applications.}, we give some topological consequences. In Corollaries \ref{main adams cor 1a} and \ref{main adams cor 2a}, we give necessary and sufficient conditions on a spectrum $Y$ for the $0$-line in the Adams $E_2$-page for $X\smash Y$ to decompose as a tensor product of the $0$-line in the Adams $E_2$-page for $X$ with the $0$-line in the Adams $E_2$-page for $Y$, with the criteria being especially explicit in the cases $X = BP, BP\langle n\rangle, ku, ko, $ and $tmf$.
\end{itemize}



\section{Basic ideas, and the restriction to bialgebras.}
\label{Basic ideas...}

We recall a few basic facts about comodules; excellent references in the coalgebra case include the book-length treatment in \cite{MR2012570}, and in the bialgebroid case, Appendix 1 of \cite{MR860042}. Given a coalgebra or a bialgebroid $(A,\Gamma)$, a {\em left $\Gamma$-comodule} is a left $A$-module $M$ equipped with a coassociative, counital left $A$-module map $\psi: M \rightarrow\Gamma\otimes_A M$.
If $\Gamma$ is a bialgebroid\footnote{We discuss comodules over a bialgebroid, not the more general case of comodules over a coalgebroid, because one needs a multiplication on $\Gamma$ in order to define a tensor product of $\Gamma$-comodules over $A$. The argument is dual to the familiar argument that, if $R$ is a $k$-algebra and $M,N$ are $R$-modules, then we need to have a $k$-linear comultiplication on $R$ in order to get a natural $R$-module structure on $M\otimes_k N$. To be clear, the material on tensor product of comodules over a coalgebra in section 3.8 of \cite{MR2012570} is about the tensor product of a $\Gamma$-comodule with an $A$-module, rather than a tensor product of two $\Gamma$-comodules.} and flat over $A$, then we have an abelian category $\Comod(\Gamma)$ of left $\Gamma$-comodules equipped with a monoidal product given by tensor product over $A$, and the {\em relatively injective left $\Gamma$-comodules} are the retracts of those of the form $\Gamma \otimes_A M$ for some $A$-module $M$, which are called {\em extended comodules}. The extended comodule functor $E: \Mod(A)\rightarrow\Comod(\Gamma)$ is {\em right} adjoint to the forgetful functor $G: \Comod(\Gamma)\rightarrow \Mod(A)$. All derived functors in this paper are {\em relative} derived functors with respect to the allowable class whose injectives are the relative injectives. See chapter IX of \cite{MR1344215} for an excellent textbook treatment of relative homological algebra in general, or Appendix 1 of \cite{MR860042} (oriented toward generalized Adams spectral sequences) or section 10.11 of \cite{MR2604913} (oriented toward Eilenberg-Moore spectral sequences) for an introductory treatment in the present setting, that is, relative derived functors with respect to this particular allowable class on comodules\footnote{The reader who prefers not to deal with relative homological algebra may be relieved to know that, when the ground ring $A$ of the bialgebroid $(A,\Gamma)$ is a field, these relative derived functors agree with ordinary derived functors. So relative homological algebra need not be mentioned at all unless the ground ring $A$ fails to be a field.}.

Here is the basic question: under what conditions do we have a K\"{u}nneth formula for comodule primitives? A first attempt at making such a question precise is as follows: given left $\Gamma$-comodules $M$ and $N$, we ask under what conditions we might have an isomorphism
\begin{equation}\label{kunneth iso 1} A\Box_{\Gamma}\left( M\otimes_A N\right) \cong \left( A\Box_{\Gamma} M\right)\otimes_A \left( A\Box_{\Gamma} N\right).\end{equation}

The most obvious requirement is that $\left( A\Box_{\Gamma} M\right)\otimes_A \left( A\Box_{\Gamma} N\right)$ must actually be {\em defined}. So $A\Box_{\Gamma} M$ and $A\Box_{\Gamma} N$ must be $A$-modules. Here is the relevant observation:
\begin{prop}\label{A-linear cotensor}
Let $(A,\Gamma)$ be a bialgebroid. Then the following conditions are equivalent:
\begin{enumerate}
\item The subgroup inclusion \begin{equation}\label{incl 238} M\Box_{\Gamma}N \hookrightarrow M\otimes_A N\end{equation} is $A$-linear for every right $\Gamma$-comodule $M$ and left $\Gamma$-comodule $N$.
\item The subgroup inclusion
\begin{equation}\label{incl 239} A\Box_{\Gamma}M \hookrightarrow M\end{equation} is $A$-linear for all left $\Gamma$-comodules $M$.
\item The bialgebroid $(A,\Gamma)$ is a bialgebra. That is, the left unit and right unit maps $\eta_L,\eta_R$ of $(A,\Gamma)$ coincide, i.e., $\eta_L = \eta_R$.
\end{enumerate}
\end{prop}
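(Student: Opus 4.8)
The plan is to establish the cycle of implications $(3)\Rightarrow(1)\Rightarrow(2)\Rightarrow(3)$, since $(1)\Rightarrow(2)$ is a trivial specialization and the real content is matching the $A$-linearity statements against the coincidence $\eta_L=\eta_R$. For $(3)\Rightarrow(1)$, I would assume $\eta_L=\eta_R=:\eta$, so that the left and right $A$-module structures on $\Gamma$ agree and, for a right comodule $M$ and left comodule $N$, the group $M\otimes_A N$ carries an unambiguous $A$-module structure. I would then check that both structure maps $\psi_M\otimes\mathrm{id}_N$ and $\mathrm{id}_M\otimes\psi_N$ from $M\otimes_A N$ to $M\otimes_A\Gamma\otimes_A N$ are $A$-linear: the coactions are $A$-linear by definition, and the only place a discrepancy could enter is the identity $\eta_R(a)\otimes x=\eta_L(a)\otimes x$ needed to slide a scalar across the middle $\Gamma$-factor, which holds precisely because $\eta_L=\eta_R$. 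Since $M\Box_{\Gamma}N$ is the equalizer of these two maps (the kernel of their difference), it is then an $A$-submodule of $M\otimes_A N$, so inclusion \eqref{incl 238} is $A$-linear.

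For $(1)\Rightarrow(2)$, I would take $M=A$, the ground ring viewed as the unit right $\Gamma$-comodule. Then \eqref{incl 238} reads $A\Box_{\Gamma}N\hookrightarrow A\otimes_A N$, and postcomposing with the canonical $A$-linear isomorphism $A\otimes_A N\cong N$ recovers \eqref{incl 239}, whose $A$-linearity is inherited from that of \eqref{incl 238}.

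The crux is $(2)\Rightarrow(3)$, which I would prove by testing condition (2) on $M=\Gamma$, regarded as a left $\Gamma$-comodule via the comultiplication $\Delta$. The unit $1_{\Gamma}$ is primitive, so $1_{\Gamma}\in A\Box_{\Gamma}\Gamma$. If the inclusion $A\Box_{\Gamma}\Gamma\hookrightarrow\Gamma$ is $A$-linear, then $A\Box_{\Gamma}\Gamma$ is an $A$-submodule of $\Gamma$, hence closed under the $A$-action $\Gamma$ carries as a comodule, namely left multiplication through $\eta_L$. Thus $a\cdot 1_{\Gamma}=\eta_L(a)$ must itself be primitive for every $a\in A$, i.e. $\Delta(\eta_L(a))=1_{\Gamma}\otimes\eta_L(a)$. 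On the other hand, $\Delta$ is left $A$-linear and $\Delta(1_{\Gamma})=1_{\Gamma}\otimes 1_{\Gamma}$, so $\Delta(\eta_L(a))=\eta_L(a)\otimes 1_{\Gamma}$. Applying $\mathrm{id}\otimes\epsilon$ to the equation $\eta_L(a)\otimes 1_{\Gamma}=1_{\Gamma}\otimes\eta_L(a)$ collapses the left side to $\eta_L(a)$ and the right side to $\eta_R(a)$, since the identification $\Gamma\otimes_A A\cong\Gamma$ reintroduces the scalar through $\eta_R$; this yields $\eta_L(a)=\eta_R(a)$ for all $a$, which is exactly $(3)$.

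I expect the main obstacle to be the bookkeeping of the several $A$-actions in play: correctly pinning down the $A$-module structure carried by the cotensor product versus that on $M\otimes_A N$, and verifying that the sole obstruction to $A$-linearity is the identity $\eta_L(a)\otimes x=\eta_R(a)\otimes x$. Once the bimodule conventions on $\Gamma$ are fixed (left action via $\eta_L$, right action via $\eta_R$), each implication reduces to the short element computations sketched above.
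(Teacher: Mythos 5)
Your proof is correct and takes essentially the same approach as the paper: the equalizer argument for $(3)\Rightarrow(1)$, the trivial specialization for $(1)\Rightarrow(2)$, and the same mechanism for $(2)\Rightarrow(3)$ (a primitive unit element together with $A$-linearity of the inclusion forces $\eta_L(a)$ to be primitive, whence $\eta_L=\eta_R$). The only cosmetic difference is that in $(2)\Rightarrow(3)$ you test on $M=\Gamma$ with coaction $\Delta$ and then apply $\mathrm{id}\otimes\epsilon$, whereas the paper tests on $M=A$ itself, where the identity $1\otimes a=\eta_R(a)\otimes 1$ in $\Gamma\otimes_A A$ yields $\eta_L=\eta_R$ directly, with no counit needed.
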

\begin{proof}\leavevmode
\begin{description}
\item[1 implies 2] Trivial.
\item[2 implies 3] Let $M = A$. Then $1\in A\Box_{\Gamma}A$, so \eqref{incl 239} implies in particular that 
\begin{align*}
 \eta_R(a) \otimes 1 
  &= 1 \otimes a \\
  &= \psi(a\cdot 1) \\
  &= a\psi(1) \\
  &= a(1\otimes 1) \\
  &= \eta_L(a)\otimes 1 \in \Gamma\otimes_A A,
\end{align*}
so $\eta_R(a) = \eta_L(a)$ for all $a\in A$.
Here $\psi$ is the coaction map on $M$.
\item[3 implies 1] Recall that the structure map $\psi: M \rightarrow\Gamma\otimes_A M$ of a left $\Gamma$-comodule $M$ is required to be a {\em left} $A$-module morphism. If $\eta_L = \eta_R$, then $\psi$ is a left $A$-module morphism if and only if it is a right $A$-module morphism. Consequently the maps
$\psi_M\otimes N$ and $M\otimes \psi_N$ in the equalizer sequence
\begin{equation*}
\xymatrix{
 M\Box_{\Gamma} N 
  \ar[r] 
   & 
  M\otimes_A N  
   \ar@<1ex>[r]^{\psi_M\otimes N}\ar@<-1ex>[r]_{M\otimes \psi_N} &
  M\otimes_A \Gamma\otimes_A N  
}\end{equation*}
are $A$-module morphisms, and so the inclusion \eqref{incl 238} is $A$-linear.
\end{description}
\end{proof}
\begin{remark}\label{kunneth counterexample}
As a consequence of Proposition \ref{A-linear cotensor}, we could have a K\"{u}nneth isomorphism \eqref{kunneth iso 1} for all $\Gamma$-comodules $M,N$ only if $(A,\Gamma)$ is a bialgebr{\em a}, not only a bialgebr{\em oid}. But even restricting to comodules over a bialgebra, the formula \eqref{kunneth iso 1} still often fails to hold. For example, if we let $A=k$ for some field $k$, let $\Gamma = k[x]$ with $x$ primitive, and let $M = N$ be the sub-$\Gamma$-comodule of $\Gamma$ which is $k$-linearly spanned by $1$ and $x$, then $A\Box_{\Gamma}M \cong A\Box_{\Gamma}N \cong k$ spanned by $1$, while $A\Box_{\Gamma}(M\otimes_A N) \cong k\oplus k$ with basis $\{ 1\otimes 1, x\otimes 1 - 1\otimes x\}$. Since the $k$-vector-space dimensions differ, there is no way we could have an isomorphism exactly of the form \eqref{kunneth iso 1}. But the ``moral'' of Proposition \ref{A-linear cotensor} remains true: any K\"{u}nneth formula for $\Cotor$---i.e., any way of describing $\Cotor$ of a tensor product over $A$ in terms of a tensor product over $A$ of $\Cotor$-modules---first requires that $A$ be a bialgebra, so that the tensor product over $A$ of $\Cotor$-modules is defined at all. 

\end{remark}


In light of Proposition \ref{A-linear cotensor}, 
we restrict our attention to comodules over bialgebras for the rest of this paper.

\section{The case where one comodule is trivial.}
\label{The case when one comodule...}
We have a tensor-cotensor relation given by the result\footnote{Proposition \ref{al-takhman result} is very easy to prove---indeed, it appears as a lemma with a five-line proof in the earliest published paper where I am aware that it appears, \cite{MR1910822}---but since Proposition \ref{al-takhman result} is the closest thing to a K\"{u}nneth theorem for $\Cotor$ which is already in the literature, we hope the reader will forgive us to trying to give a bit of history of this lemma. After appearing as Lemma 3.8 in \cite{MR1910822}, it shows up as Lemma 2.3 in \cite{MR1916479}, the published form of Al-Takhman's D\"{u}sseldorf thesis \cite{altakhmanthesis}, which is easier to locate than \cite{MR1910822}, and finally it appears as 10.6 in the book \cite{MR2012570}.}
\begin{prop}\label{al-takhman result}
If $\Gamma$ is a coalgebra\footnote{The isomorphisms \eqref{map 1} and \eqref{map 2} appear to involve tensor products, over $A$, of $\Gamma$-comodules, and yet $\Gamma$ is only a coalgebra, not a bialgebra. The reason these tensor products make sense, as $\Gamma$-comodules, despite the lack of a multiplication on $\Gamma$ is that in each case, one of the tensor factors is a {\em trivial} comodule.} over a commutative ring $A$, $M$ is a right $\Gamma$-comodule, and $N$ is a left $\Gamma$-comodule, and $W$ is an $A$-module, then we have natural $A$-linear maps
\begin{align}
\label{map 1} W\otimes_A (M\Box_{\Gamma}N) &\rightarrow (W\otimes_A M)\Box_{\Gamma} N, \\
\label{map 2} (M\Box_{\Gamma}N)\otimes_A W &\rightarrow M\Box_{\Gamma} (N\otimes_A W),
\end{align}
where $W\otimes_A M$ and $N\otimes_A W$ are each given the {\em trivial} $\Gamma$-comodule structure, i.e., the coaction maps are 
\begin{align*}
 W\otimes_A M &\rightarrow W\otimes_A M\otimes_A \Gamma \\
 w\otimes m &\rightarrow w \otimes \psi_M(m), \mbox{\ \ \ and} \\
 N\otimes_A W &\rightarrow \Gamma \otimes_A N\otimes_A W\\
 n\otimes w &\rightarrow \psi_N(n) \otimes w,
\end{align*}
where $\psi_M: M \rightarrow M\otimes_A \Gamma$ and $\psi_N: N \rightarrow \Gamma\otimes_A N$ are the coaction maps of $M$ and $N$, respectively.

Furthermore, the following conditions are equivalent:
\begin{itemize}
\item The map \eqref{map 1} is an isomorphism.
\item The map \eqref{map 2} is an isomorphism.
\item The canonical inclusion $M\Box_{\Gamma}N \hookrightarrow M\otimes_AN$ remains injective after applying the functor $-\otimes_A W$.
\end{itemize}
\end{prop}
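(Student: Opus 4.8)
The plan is to realize the cotensor product as a kernel and to recognize maps \eqref{map 1} and \eqref{map 2} as the canonical comparison maps produced when a right-exact functor is applied to a left-exact sequence. Recall that $M\Box_\Gamma N$ is the kernel of the difference $d=(\psi_M\otimes N)-(M\otimes\psi_N)\colon M\otimes_A N\to M\otimes_A\Gamma\otimes_A N$, so we have a left-exact sequence
\begin{equation*}
0\to M\Box_\Gamma N\xrightarrow{\ \iota\ } M\otimes_A N\xrightarrow{\ d\ } M\otimes_A\Gamma\otimes_A N.
\end{equation*}
To construct \eqref{map 1} I would apply the right-exact functor $W\otimes_A-$ and use the associativity isomorphism $W\otimes_A(M\otimes_A N)\cong(W\otimes_A M)\otimes_A N$: the map $W\otimes_A\iota$ has image contained in $\ker(W\otimes d)$, since $d\circ\iota=0$. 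The essential point is that, because $W\otimes_A M$ carries the \emph{trivial} coaction $W\otimes\psi_M$, the differential defining $(W\otimes_A M)\Box_\Gamma N$ is exactly $W\otimes d$ under the associativity isomorphisms; hence $(W\otimes_A M)\Box_\Gamma N\cong\ker(W\otimes d)$, and the corestriction of $W\otimes_A\iota$ is the desired natural $A$-linear map. Naturality and $A$-linearity are then immediate. The construction of \eqref{map 2} is identical with $-\otimes_A W$ in place of $W\otimes_A-$, giving $M\Box_\Gamma(N\otimes_A W)\cong\ker(d\otimes W)$.

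With these identifications, \eqref{map 1} and \eqref{map 2} are precisely the comparison maps $W\otimes_A\ker d\to\ker(W\otimes d)$ and $\ker d\otimes_A W\to\ker(d\otimes W)$. I would first settle the equivalence of the two isomorphism statements: since $A$ is commutative, the natural isomorphism $W\otimes_A-\cong-\otimes_A W$ is compatible with $d$, so the two comparison maps are identified and \eqref{map 1} is an isomorphism if and only if \eqref{map 2} is. One half of the link with the injectivity condition is then immediate: the comparison map, followed by the inclusion $\ker(W\otimes d)\hookrightarrow W\otimes_A(M\otimes_A N)$, equals $\iota\otimes_A W$ up to associativity, and since that inclusion is a monomorphism, injectivity of the comparison map is equivalent to injectivity of $\iota\otimes_A W$. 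Thus each isomorphism statement implies the third condition.

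The main obstacle is the converse, namely that once $\iota\otimes_A W$ is injective the comparison map is also surjective. Here I would pass to the reduced differential: using the counit splitting $\Gamma\cong A\oplus\overline\Gamma$ with $\overline\Gamma=\ker\varepsilon$, the map $d$ factors through $M\otimes_A\overline\Gamma\otimes_A N$ and becomes $\overline d=(\overline\psi_M\otimes N)-(M\otimes\overline\psi_N)$, so that $M\otimes_A\Gamma\otimes_A N=(M\otimes_A N)\oplus(M\otimes_A\overline\Gamma\otimes_A N)$ with $\operatorname{im} d$ in the second summand. I would then read off surjectivity from the $\operatorname{Tor}_1^A$ terms in the long exact sequences attached to $0\to\ker\overline d\to M\otimes_A N\to\operatorname{im}\overline d\to 0$ and to $\operatorname{im}\overline d\hookrightarrow M\otimes_A\overline\Gamma\otimes_A N$: surjectivity is governed by the vanishing of the connecting map out of $\operatorname{Tor}_1^A(W,-)$ for the image of $\overline d$, which is what is required beyond the bare injectivity of $\iota\otimes_A W$. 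I expect this to be the crux, and the cleanest way to phrase the third condition is that $-\otimes_A W$ preserves the kernel defining the cotensor product, the injectivity of $\iota\otimes_A W$ being its first instance. Finally, the case in which $W$ is flat over $A$ makes $-\otimes_A W$ exact, so all three conditions hold automatically, which is the form used in \Cref{The case when one comodule...}.
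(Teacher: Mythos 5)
First, a point about the comparison you were asked for: the paper does not actually prove Proposition \ref{al-takhman result} at all --- it is quoted from Al-Takhman's work (see the footnote attached to it), so there is no internal proof to measure your attempt against, and your proposal has to be judged on its own terms. Much of it is correct, and is surely the intended mechanism: realizing $M\Box_{\Gamma}N = \ker d$ with $d = (\psi_M\otimes N)-(M\otimes \psi_N)$; the identifications $(W\otimes_A M)\Box_{\Gamma}N \cong \ker(W\otimes d)$ and $M\Box_{\Gamma}(N\otimes_A W)\cong \ker(d\otimes W)$, which is exactly where the ``trivial'' coaction on the tensor factor enters; the maps \eqref{map 1} and \eqref{map 2} as the resulting comparison maps; the twist argument showing \eqref{map 1} is an isomorphism iff \eqref{map 2} is; the observation that the comparison map followed by the monomorphism $\ker(d\otimes W)\hookrightarrow (M\otimes_A N)\otimes_A W$ equals $\iota\otimes_A W$, so that each isomorphism statement implies the third condition; and the flat case.

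The gap is the converse, which you flag as the crux and do not close --- and it cannot be closed, because the implication ``third condition $\Rightarrow$ isomorphism'' is false as the proposition is stated. Your own Tor analysis says why one should doubt it: injectivity of $\iota\otimes_A W$ controls the \emph{kernel} of the comparison map (via the connecting map $\operatorname{Tor}_1^A(\operatorname{im} d, W)\to (M\Box_{\Gamma}N)\otimes_A W$), while its \emph{cokernel} is controlled by injectivity of $(\operatorname{im} d)\otimes_A W \to (M\otimes_A\Gamma\otimes_A N)\otimes_A W$, an independent condition. Concretely: take $A=\mathbb{Z}$ and $\Gamma = \mathbb{Z}\{1,x\}$ with $x$ primitive, i.e., the bialgebra $\mathbb{Z}[x]/(x^2)$. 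A right $\Gamma$-comodule is then precisely a $\mathbb{Z}$-module $M$ with a square-zero endomorphism $\sigma$, via $\psi_M(m) = m\otimes 1 + \sigma(m)\otimes x$. Take $M = \mathbb{Z}^2$ with $\sigma(a,b)=(2b,0)$, take $N=\mathbb{Z}$ with trivial coaction, and $W = \mathbb{Z}/2$. Under $M\otimes_A\Gamma\otimes_A N \cong M\oplus Mx$ the map $d$ is identified with $\sigma$, so $M\Box_{\Gamma}N = \ker\sigma = \mathbb{Z}\oplus 0$, a direct summand of $M\otimes_A N = M$; hence $\iota\otimes_A W$ is injective --- indeed the inclusion is split, hence pure, so it stays injective after tensoring with \emph{every} $A$-module, and no quantification over $W$ in the third condition can rescue the statement. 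On the other hand $\sigma\otimes_A W = 0$, so $M\Box_{\Gamma}(N\otimes_A W) = \ker(\sigma\otimes_A W) = (\mathbb{Z}/2)^2$, while $(M\Box_{\Gamma}N)\otimes_A W \cong \mathbb{Z}/2$, and \eqref{map 2} is the inclusion of the first summand: injective but not surjective. So conditions one and two fail while condition three holds, with $\Gamma$, $M$, $N$ all free over $A$. The condition that is actually equivalent to the isomorphism --- and which your framework proves essentially tautologically, and is presumably what the cited sources intend --- is the one you arrive at in your closing sentence: the whole defining sequence $0\to M\Box_{\Gamma}N\to M\otimes_A N \to M\otimes_A \Gamma\otimes_A N$ must remain \emph{exact} after applying $-\otimes_A W$, not merely have its first map remain injective. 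Two smaller remarks: your counit splitting $\Gamma\cong A\oplus\ker\varepsilon$ requires $\varepsilon$ to be a split epimorphism of $A$-modules, which holds for bialgebras and over fields but not for an arbitrary coalgebra over a commutative ring (all you need, and all counitality gives, is $(M\otimes\varepsilon\otimes N)\circ d = 0$); and none of this damages the paper downstream, since Corollary \ref{al-takhman cor} and Theorem \ref{cotor sseq} only invoke the proposition when the relevant $W$ is flat over $A$, where your argument is complete.
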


While Proposition \ref{al-takhman result} is the closest result to a K\"{u}nneth theorem for $\Cotor$ (in this case, only $\Cotor^0$, i.e., the cotensor product) appearing in the literature, it has a critical limitation that we need to overcome: it describes $\Cotor_{\Gamma}^*(L,M\otimes_A N)$ in terms of $\Cotor_{\Gamma}^*(L,M)$ and $N$ only when either $M$ or $N$ has trivial $\Gamma$-coaction. Here a left $\Gamma$-comodule $W$ is said to have {\em trivial coaction} if its coaction map $\psi_W: W \rightarrow\Gamma \otimes_A W$ is given by $\psi_W(w) = 1\otimes w$. 
This limitation is far too strict for Proposition \ref{al-takhman result} to be useful for the topological applications of $\Cotor$. One would like to find a description of $\Cotor_{\Gamma}^*(L,M\otimes_A N)$ in terms as close as possible to $\Cotor_{\Gamma}^*(L,M)$ and $\Cotor_{\Gamma}^*(L,N)$, {\em without} assuming that either $M$ or $N$ have trivial $\Gamma$-coaction.

Before moving on to the case where $M$ and $N$ each have nontrivial coaction, we at least remark that Proposition \ref{al-takhman result} has an easy corollary for the higher $\Cotor$ groups.
We first need a couple of easy lemmas. Recall that $E: \Mod(A)\rightarrow\Comod(\Gamma)$ denotes the extended comodule functor, which is right adjoint to the forgetful functor $G: \Comod(\Gamma)\rightarrow\Mod(A)$. Then:
\begin{lemma}\label{doi lemma}
There exists an isomorphism $E(M\otimes_A GN) \cong (EM)\otimes_A N$ of $\Gamma$-comodules, natural in the variables $M$ and $N$.
\end{lemma}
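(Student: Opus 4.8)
The plan is to write down an explicit ``untwisting'' isomorphism and its inverse, rather than to argue formally through the adjunction $G\dashv E$. First observe that, since the underlying $A$-module of $EV$ is $\Gamma\otimes_A V$ and $G$ merely forgets the coaction, both sides of the asserted isomorphism have the same underlying $A$-module, namely $\Gamma\otimes_A M\otimes_A N$. Writing $\psi_N(n) = \sum n_{(-1)}\otimes n_{(0)}$ with $n_{(-1)}\in\Gamma$ and $n_{(0)}\in N$, I would define
\[ \Phi\colon (EM)\otimes_A N \longrightarrow E(M\otimes_A GN), \qquad \Phi\big((\gamma\otimes m)\otimes n\big) = \sum \gamma\, n_{(-1)}\otimes m\otimes n_{(0)}, \]
that is, absorb the coaction of $N$ into the $\Gamma$-factor using the multiplication of $\Gamma$. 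This map is manifestly $A$-linear and natural in $M$ and $N$, with the tensor factor $M$ playing no active role.

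The first thing to check is that $\Phi$ is a morphism of $\Gamma$-comodules. The coaction on $(EM)\otimes_A N$ sends $(\gamma\otimes m)\otimes n$ to $\sum \gamma_{(1)} n_{(-1)}\otimes\big((\gamma_{(2)}\otimes m)\otimes n_{(0)}\big)$, while the (extended) coaction on $E(M\otimes_A GN)$ simply applies $\Delta$ to the $\Gamma$-factor. Chasing $(\gamma\otimes m)\otimes n$ around both ways and invoking (i) that the comultiplication $\Delta$ of the bialgebra $\Gamma$ is an algebra map, so that $\Delta(\gamma\, n_{(-1)}) = \sum \gamma_{(1)} (n_{(-1)})_{(1)}\otimes \gamma_{(2)} (n_{(-1)})_{(2)}$, and (ii) coassociativity of $\psi_N$, which rewrites $\sum n_{(-1)}\otimes (n_{(0)})_{(-1)}\otimes(n_{(0)})_{(0)}$ as $\sum (n_{(-1)})_{(1)}\otimes (n_{(-1)})_{(2)}\otimes n_{(0)}$, one finds that both composites equal $\sum \gamma_{(1)}(n_{(-1)})_{(1)}\otimes\gamma_{(2)}(n_{(-1)})_{(2)}\otimes m\otimes n_{(0)}$. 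Thus $\Phi$ is a comodule map. It is here, and essentially only here, that the multiplicativity of $\Delta$ (i.e.\ the bialgebra, as opposed to mere coalgebra, structure) is used.

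The crux of the argument, and the step I expect to be the main obstacle, is producing a two-sided inverse: the map $\Phi$ is typically not invertible on the nose, and one must untwist in the reverse direction using the conjugation $c\colon\Gamma\to\Gamma$ (the antipode of $\Gamma$). I would set $\Psi(\gamma\otimes m\otimes n) = \sum \gamma\, c(n_{(-1)})\otimes m\otimes n_{(0)}$ and verify $\Psi\Phi = \mathrm{id}$ and $\Phi\Psi = \mathrm{id}$ by the same two rewrites together with the antipode identities $\sum (n_{(-1)})_{(1)}\, c((n_{(-1)})_{(2)}) = \epsilon(n_{(-1)})1 = \sum c((n_{(-1)})_{(1)})\,(n_{(-1)})_{(2)}$ and the counit axiom $\sum \epsilon(n_{(-1)})\,n_{(0)} = n$ of the comodule $N$; for instance $\Psi\Phi(\gamma\otimes n) = \sum \gamma\,(n_{(-1)})_{(1)}c((n_{(-1)})_{(2)})\otimes n_{(0)} = \sum\gamma\,\epsilon(n_{(-1)})\otimes n_{(0)} = \gamma\otimes n$. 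The essential input here is the existence of the conjugation on $\Gamma$, which is exactly why this lemma is a genuinely Hopf-algebraic statement rather than a formal consequence of the $G\dashv E$ adjunction; the hypothesis is harmless in the motivating examples, where $\Gamma$ (e.g.\ the dual Steenrod algebra, any connected graded bialgebra, or the $(A,\Gamma)$ arising from a Hopf algebroid) carries a canonical conjugation. Naturality of $\Phi$ and $\Psi$ in both $M$ and $N$ is immediate from the explicit formulas, completing the proof.
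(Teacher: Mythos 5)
Your computations are all correct, and your route is genuinely different from the paper's: the paper's entire proof of Lemma \ref{doi lemma} is a citation to Proposition 9 of \cite{MR597479} (plus the remark that the argument carries over to bialgebroids), whereas you construct the untwisting map $\Phi$ explicitly, check it is a comodule map using multiplicativity of $\Delta$ and coassociativity of $\psi_N$, and invert it using the conjugation $c$. What the citation buys is brevity; what your construction buys is a self-contained argument in which naturality is visible from the formulas and, more importantly, in which the exact structure on $\Gamma$ being used is exposed.

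That last point deserves emphasis, because your flagged ``crux'' is even sharper than you state: the conjugation is not just what your method needs, it is what the truth of the statement needs. For a bialgebra with no antipode the lemma is false. Take $\Gamma = k[x]$ with $x$ grouplike (the monoid bialgebra of $\mathbb{N}$ over a field $k$), $A = M = k$, and $N = \Gamma$. Left $\Gamma$-comodules are exactly $\mathbb{N}$-graded $k$-vector spaces; the extended comodule $E(M\otimes_A GN) = E(G\Gamma)$ has every homogeneous component infinite-dimensional, while $(EM)\otimes_A N \cong \Gamma\otimes_k \Gamma$ with the codiagonal coaction has component of dimension $c+1$ in degree $c$, so no comodule isomorphism exists at all (your $\Phi$ here is $x^a\otimes x^b \mapsto x^{a+b}\otimes x^b$, injective but not surjective). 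So ``bialgebra'' in Lemma \ref{doi lemma}, and ``bialgebroid'' in its proof, must be read as ``Hopf algebra'' and ``Hopf algebroid,'' i.e., your added hypothesis is a necessary correction to the statement, not a defect of your argument. This correction costs the paper essentially nothing: connected graded bialgebras admit antipodes automatically, the motivating examples ($A_*$, $E_*E$, etc.) carry conjugations, and the only downstream uses of the lemma (via Lemma \ref{rel injs are an ideal}, in Corollary \ref{al-takhman cor} and hence Theorem \ref{cotor sseq}) apply it with $N$ of trivial coaction, in which case the asserted isomorphism is the identity map and needs no antipode. But a referee's version of your proof should record this hypothesis explicitly, and it would be worth checking whether Doi's Proposition 9 is in fact stated for Hopf algebras rather than bialgebras, since no proof can establish the bialgebra-level statement.
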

\begin{proof}
See Proposition 9 in \cite{MR597479}, where this is proven for $A$ a bialgebra; the same proof applies when $A$ is a bialgebroid.
\end{proof}

\begin{lemma}\label{rel injs are an ideal}
If $M,N$ are left $\Gamma$-comodules and $M$ is relatively injective, $M\otimes_A N$ is also relatively injective.
\end{lemma}
\begin{proof}
Choose an $A$-module $\tilde{M}$ and left $\Gamma$-comodule morphisms $i: M \rightarrow E\tilde{M}$ and $\pi: E\tilde{M} \rightarrow M$ such that $\pi\circ i = \id_M$. Then we have the commutative diagram
\[\xymatrix{
 M\otimes_A N \ar@/_1pc/[rr]_{\id} \ar[r]^{i\otimes_A N} & (E \tilde{M})\otimes_A N \ar[r]^{\pi\otimes_A N} & M\otimes_A N ,
}\]
and $(E \tilde{M})\otimes_A N\cong E(\tilde{M}\otimes_A N)$ as left $\Gamma$-comodules, by Lemma \ref{doi lemma}. So $M\otimes_A N$ is indeed a retract of the extended comodule $E(\tilde{M}\otimes_A N)$.
\end{proof}

Now we have the following corollary of Proposition \ref{al-takhman result}:
\begin{corollary}\label{al-takhman cor}
If $\Gamma$ is a coalgebra over a commutative ring $A$, $L$ is a right $\Gamma$-comodule, $M$ is a left $\Gamma$-comodule, and $N$ is a {\em trivial} left $\Gamma$-comodule which is flat over $A$, then we have isomorphisms of $A$-modules
\begin{align*}
 \Cotor_{\Gamma}^n(L,M\otimes_A N) 
 &\cong \Cotor_{\Gamma}^n(L,M)\otimes_A N\\
 &\cong \Cotor_{\Gamma}^n(L,M)\otimes_A (A\Box_{\Gamma} N)\end{align*}
for each nonnegative integer $n$.

In particular, in the $n=0$ case we have a K\"{u}nneth isomorphism
\[ L\Box_{\Gamma}(M\otimes_A N) \cong (L\Box_{\Gamma} M) \otimes_A (A\Box_{\Gamma} N).\] 
\end{corollary}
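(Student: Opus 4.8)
The plan is to compute both sides from a single relative injective resolution of $M$, transport that resolution through the functor $-\otimes_A N$, and then compare the two complexes using the maps of Proposition \ref{al-takhman result}. Concretely, I would first fix a relative injective resolution $M \xrightarrow{\simeq} I^\bullet$ in $\Comod(\Gamma)$ (these exist because the extended comodules supply enough relative injectives), so that by definition $\Cotor_{\Gamma}^n(L,M) = H^n(L\Box_{\Gamma} I^\bullet)$.

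The first substantive step is to argue that $I^\bullet\otimes_A N$ is a relative injective resolution of $M\otimes_A N$, where, as in Proposition \ref{al-takhman result}, the trivial coaction on $N$ is exactly what makes all these tensor products into $\Gamma$-comodules. Two things must be checked. Each term $I^j\otimes_A N$ is relatively injective by Lemma \ref{rel injs are an ideal}. And the augmented complex is allowably exact: a relative injective resolution is by construction $A$-split exact (contractible as a complex of $A$-modules), and applying the additive functor $-\otimes_A N$ to a contractible complex yields a contractible, hence allowably exact, complex. It is worth emphasizing that this second point needs \emph{no} flatness hypothesis, since split exactness is preserved by any additive functor. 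Consequently $\Cotor_{\Gamma}^n(L,M\otimes_A N) = H^n\big(L\Box_{\Gamma}(I^\bullet\otimes_A N)\big)$.

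Next I would compare $L\Box_{\Gamma}(I^\bullet\otimes_A N)$ with $(L\Box_{\Gamma} I^\bullet)\otimes_A N$ degreewise. Taking $W=N$ (regarded now merely as an $A$-module) in the map \eqref{map 2} of Proposition \ref{al-takhman result}, with right comodule $L$ and left comodule $I^j$, produces a natural map $(L\Box_{\Gamma} I^j)\otimes_A N \to L\Box_{\Gamma}(I^j\otimes_A N)$. By the list of equivalent conditions in Proposition \ref{al-takhman result}, this map is an isomorphism precisely when the inclusion $L\Box_{\Gamma} I^j \hookrightarrow L\otimes_A I^j$ stays injective after applying $-\otimes_A N$; since $N$ is flat over $A$, it does. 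These isomorphisms are natural in the comodule argument, hence compatible with the differentials of $I^\bullet$, and therefore assemble into an isomorphism of cochain complexes $(L\Box_{\Gamma} I^\bullet)\otimes_A N \cong L\Box_{\Gamma}(I^\bullet\otimes_A N)$.

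Finally I would pass to cohomology. Because $N$ is flat over $A$, the functor $-\otimes_A N$ is exact and commutes with the formation of cohomology, so $H^n\big((L\Box_{\Gamma} I^\bullet)\otimes_A N\big)\cong H^n(L\Box_{\Gamma} I^\bullet)\otimes_A N = \Cotor_{\Gamma}^n(L,M)\otimes_A N$. Combining the three computations yields the first claimed isomorphism; the second follows upon noting that every element of a trivial comodule is primitive, so the canonical map $A\Box_{\Gamma} N \to N$ is an isomorphism. I expect the main obstacle to be the bookkeeping in the first step, namely verifying that $I^\bullet\otimes_A N$ is a bona fide relative injective resolution, and in particular keeping straight \emph{where} flatness of $N$ is genuinely needed (in the degreewise comparison via \eqref{map 2}, and in commuting cohomology past $-\otimes_A N$) versus where the mere $A$-splitness of the resolution already suffices (in preserving exactness of the resolution itself).
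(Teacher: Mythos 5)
Your proposal is correct and follows essentially the same route as the paper's proof: tensor a relative injective resolution $I^{\bullet}$ of $M$ with $N$, observe that the terms remain relatively injective by Lemma \ref{rel injs are an ideal}, identify $L\Box_{\Gamma}(I^{\bullet}\otimes_A N)$ with $(L\Box_{\Gamma}I^{\bullet})\otimes_A N$ via Proposition \ref{al-takhman result} (using flatness of $N$ to verify the injectivity criterion there), and use flatness again to pass $-\otimes_A N$ through cohomology. The only divergence is that you justify exactness of $I^{\bullet}\otimes_A N$ by the $A$-splitness of the resolution rather than by flatness of $N$ as the paper does; this is a harmless (indeed, in the relative-homological-algebra setting, arguably more careful) variant, since it produces an allowable, $A$-split resolution rather than a merely exact one.
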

\begin{proof}
Since $N$ has trivial coaction, we have an isomorphism of left $\Gamma$-comodules $N\cong A\Box_{\Gamma}N$, which we use freely throughout this proof.

Given a resolution $I^{\bullet}$ of $M$ by relatively injective left $\Gamma$-comodules, the cochain complex of left $A$-comodules $I^{\bullet}\otimes_A N$ is exact by flatness of $N$, and it is a complex of relative injectives due to Lemma \ref{rel injs are an ideal}. Consequently the cohomology of the cochain complex $L\Box_{\Gamma}(I^{\bullet}\otimes_A N)$ yields the $\Cotor$-groups $\Cotor^*_{\Gamma}(L,M\otimes_A N)$. Applying Proposition \ref{al-takhman result} yields the isomorphism of cochain complexes $L\Box_{\Gamma}(I^{\bullet}\otimes_A N) \cong (L\Box_{\Gamma}I^{\bullet})\otimes_A N$, and the cohomology of the cochain complex on the right is $\Cotor_{\Gamma}^*(L,M)\otimes_AN$, again using flatness of $N$.
\end{proof}
Corollary \ref{al-takhman cor} has a partial converse given below by Corollaries \ref{converse to al-takhman lemma} and \ref{graded converse to al-takhman lemma}.

\section{The primitive filtration of a comodule.}
\label{The primitive filtration...}

A basic idea in what follows is that, when the comodules $M,N$ both have nontrivial coaction, the results of \cref{The case when one comodule...} do not directly apply, but we could try to filter one of the comodules $M$ or $N$ so that each of the filtration quotients has trivial coaction, in order to get a spectral sequence whose input term could be simplified by some application of Corollary \ref{al-takhman cor}.
There is a canonical and quite useful such filtration\footnote{This primitive filtration on a comodule is {\em not} the same as the primitive filtration on a bialgebra, from \cite{MR0174052}. We have never seen our primitive filtration in the literature and have never heard it mentioned by others, but it is a very simple and effective idea, and we expect it has probably been considered by others on more than occasion.}, defined in Definition-Proposition \ref{main defs}:
\begin{definition-proposition}\leavevmode    \label{main defs}
\begin{itemize}
\item
Let $(A,\Gamma)$ be a bialgebroid, and let $M$ be a left $\Gamma$-comodule. By the {\em primitive filtration of $M$} we mean the filtration 
\begin{equation}\label{primitive filt} 0 = M_{-1} \subseteq M_0 \subseteq M_1 \subseteq \dots \subseteq M\end{equation}
of $M$ by subgroups defined as follows: $M_i$ is the kernel of the projection $M \rightarrow M(i+1)$, where 
\begin{equation}\label{surj seq} M = M(0) \rightarrow M(1) \rightarrow M(2) \rightarrow \dots \end{equation}
is a sequence of surjective group homomorphisms defined inductively by letting $M(i+1)$ be the cokernel of the inclusion $A\Box_{\Gamma} M(i) \hookrightarrow M(i)$.
Consequently we have short exact sequences of groups
\begin{eqnarray}
\label{ses 004040}
 0 \rightarrow M_i \rightarrow M \rightarrow M(i+1) \rightarrow 0,\\
\label{ses 004041}
 0 \rightarrow A\Box_{\Gamma}M(i) \rightarrow M(i) \rightarrow M(i+1) \rightarrow 0,\mbox{\ \ \ and}\\
\label{ses 004042}
 0 \rightarrow M_{i} \rightarrow M_{i+1} \rightarrow A\Box_{\Gamma}M(i+1) \rightarrow 0.
\end{eqnarray}
\item
We will say that {\em $M$ has exhaustive primitive filtration} if $\cup_i M_i = M$. We will say that {\em $M$ has finite primitive filtration} if $M_i = M$ for some $M$.
\item
If $(A,\Gamma)$ is a bialgebra, then the primitive filtration \eqref{primitive filt} is a filtration by sub-$\Gamma$-comodules, not merely by subgroups, and the extensions \eqref{ses 004040},\eqref{ses 004041}, and \eqref{ses 004042} are extensions of $\Gamma$-comodules.
\item
When the underlying $A$-module extensions of the left $\Gamma$-comodule extensions \eqref{ses 004042} are split, we say that primitive filtration on $M$ is {\em split.} 
The primitive filtration on $M$ is automatically split if, for example 
the commutative ring $A$ is semisimple, e.g. a field.
\item
When the primitive filtration on $M$ is split and exhaustive, then we have an isomorphism of $A$-modules\footnote{But generally not an isomorphism of $\Gamma$-comodules.} $M\cong \coprod_{i\geq 0} A\Box_{\Gamma}M(i)$. We then refer to this grading on $M$, whose degree $i$ summand $M^i$ is $A\Box_{\Gamma}M(i)$, as the {\em primitive grading} on $M$.
\end{itemize}
\end{definition-proposition}
\begin{proof}
By Proposition \ref{A-linear cotensor}, if $(A,\Gamma)$ is a bialgebra, then each of the inclusions $A\Box_{\Gamma} M(i) \hookrightarrow M(i)$ is a left $\Gamma$-comodule morphism, and consequently \eqref{surj seq} is a sequence of left $\Gamma$-comodule morphisms, so \eqref{primitive filt} is also a sequence of left $\Gamma$-comodule morphisms, and similarly for \eqref{ses 004040},\eqref{ses 004041}, and \eqref{ses 004042}.
\end{proof}

Of course every left $\Gamma$-comodule whose underlying $A$-module is Artinian has finite primitive filtration. Another useful source of comodules with well-behaved primitive filtrations is Proposition \ref{conn and exhaustive prim filt}:
\begin{prop}\label{conn and exhaustive prim filt}
Let $(A,\Gamma)$ be a graded bialgebra which is connected, i.e., $A$ and $\Gamma$ each are trivial in negative degrees and the unit map $\eta: A \rightarrow \Gamma$ is surjective (equivalently, an isomorphism) in grading degree zero. Then every bounded-below\footnote{It is standard that a graded group, module, ring, etc. $M$ is said to be {\em bounded below} if there exists an integer $n$ such that $M$ is trivial in grading degrees $<n$.} graded left $\Gamma$-comodule has exhaustive primitive filtration.
\end{prop}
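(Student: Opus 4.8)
The plan is to reduce the entire statement to a single structural feature of connectedness, namely that the \emph{reduced coaction} of a homogeneous element strictly lowers the degree of the comodule factor. First I would set up that reduced coaction. Using the counit axiom $(\epsilon\otimes M)\circ\psi_M=\id_M$ together with the splitting $\Gamma\cong A\oplus\bar\Gamma$ of graded $A$-modules induced by $\epsilon\circ\eta=\id_A$ (here $\bar\Gamma:=\ker(\epsilon\colon\Gamma\to A)$ is the augmentation coideal), every homogeneous $m\in M$ decomposes as $\psi_M(m)=1\otimes m+\bar\psi_M(m)$, where $\bar\psi_M\colon M\to\bar\Gamma\otimes_A M$ is $\psi_M$ followed by the projection $\Gamma\to\bar\Gamma$. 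An element $m$ is primitive, i.e. lies in $A\Box_\Gamma M$, exactly when $\bar\psi_M(m)=0$, so that $A\Box_\Gamma M=\ker(\bar\psi_M)$. The one place connectedness enters is here: since $\eta$ is an isomorphism in degree $0$, so is $\epsilon$ in degree $0$, whence $\bar\Gamma$ vanishes in degrees $\leq 0$ and is concentrated in strictly positive degrees.

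Second, I would record the degree bookkeeping this forces. If $m$ is homogeneous of degree $d$, then $\bar\psi_M(m)$ is a sum of elementary tensors $\gamma\otimes m'$ with $\deg\gamma\geq 1$, hence $\deg m'\leq d-1$; thus the reduced coaction sees $m$ only through homogeneous components of $M$ of strictly smaller degree. I would also use the two identifications coming from Definition-Proposition \ref{main defs}: the surjection $M\to M(i)$ has kernel $M_{i-1}$, so $M(i)\cong M/M_{i-1}$, and the short exact sequence \eqref{ses 004042} gives $M_i/M_{i-1}\cong A\Box_\Gamma M(i)$. Consequently $m\in M_i$ if and only if the image $\bar m$ of $m$ in the quotient comodule $M(i)=M/M_{i-1}$ is primitive, equivalently $\bar\psi_{M(i)}(\bar m)=0$; and by naturality of $\bar\psi$, the element $\bar\psi_{M(i)}(\bar m)$ is the image of $\bar\psi_M(m)$ under $\bar\Gamma\otimes_A M\to\bar\Gamma\otimes_A M(i)$.

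With these in hand the proof is a single induction on degree. Let $n_0$ be the lowest degree in which $M$ is nonzero, and I would show that the degree-$d$ homogeneous component of $M$ lies in $M_{d-n_0}$, by induction on $d$. For the base case $d=n_0$, a degree-$n_0$ element $m$ has $\bar\psi_M(m)$ supported in degrees $\leq n_0-1$, where $M$ vanishes, so $\bar\psi_M(m)=0$ and $m\in A\Box_\Gamma M=M_0$. For the inductive step, write $\bar\psi_M(m)=\sum_k\gamma_k\otimes m'_k$ with each $m'_k$ homogeneous of degree $\leq d-1$; by the inductive hypothesis each $m'_k$ lies in $M_{(d-1)-n_0}$, hence maps to $0$ in $M/M_{(d-1)-n_0}=M(d-n_0)$. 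Therefore $\bar\psi_{M(d-n_0)}(\bar m)=0$, so $\bar m$ is primitive and $m\in M_{d-n_0}$. Since a graded module is the sum of its homogeneous components and the $M_i$ are increasing, $\bigcup_i M_i$ contains every homogeneous element and so equals $M$; the filtration is exhaustive.

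The only genuine obstacle is the first paragraph: making the identification $A\Box_\Gamma M=\ker(\bar\psi_M)$ precise over a graded ground ring $A$ that need not be a field, and verifying carefully that $\bar\Gamma$ is positively graded from the connectedness hypothesis. Once the reduced coaction is correctly set up and its degree-lowering property established, the degree induction is essentially forced and routine; in particular it yields the sharper quantitative statement that a homogeneous element of degree $d$ is already filtered by stage $d-n_0$.
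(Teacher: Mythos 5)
Your proof is correct and is essentially the paper's own argument: both rest on the observation that connectedness forces the coaction, modulo its counit component, to strictly lower degree in the comodule factor, so that bottommost-degree elements of any bounded-below comodule are primitive. The paper organizes the resulting induction over filtration stages (showing each $M(i)$ is trivial in degrees $<n+i$), while you induct on element degree via the explicit reduced coaction $\bar\psi$ and its naturality under $M\to M(i)$; this is the same bookkeeping and yields the same quantitative conclusion, that a degree-$d$ element lies in $M_{d-n_0}$.
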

\begin{proof}
If $M$ is a bounded-below graded left $\Gamma$-comodule which is trivial in grading degrees $<n$, then $A\Box_{\Gamma} M \rightarrow M$ is an isomorphism in the bottommost grading degree, so its cokernel is also bounded-below and with a strictly higher lower bound on the grading degrees of its nontrivial summands. By induction, in \eqref{surj seq} each $M(i)$ is trivial in grading degrees $<n+i$, and in \eqref{primitive filt}, the inclusion $M_i\subseteq M$ is an isomorphism in grading degrees $<n+i-1$. So every homogeneous element of $M$ is contained in $M(i)$ for some $i$.
\end{proof}

We need a couple of easy lemmas, Lemma \ref{easy lemma 1} and \ref{easy lemma 2}, which are certainly not new:
\begin{lemma}\label{easy lemma 1}
Let $(A,\Gamma)$ be a bialgebra, let $I$ be a set, and let $\{ M_i: i\in I\}$ be a set of left $\Gamma$-comodules. Let $L$ be a left $\Gamma$-comodule. Then the natural map\footnote{Since the forgetful functor from $\Gamma$-comodules to $A$-modules is a left adjoint, it preserves colimits, and so the coproduct in the domain of \eqref{nat map 3049498} can equally well be regarded as a coproduct in $A$-modules or a coproduct in $\Gamma$-comodules. Of course the coproduct in the codomain of \eqref{nat map 3049498} must be regarded as a coproduct in $\Gamma$-comodules, since otherwise it would not make sense to apply the cotensor product to it.}
\begin{equation}\label{nat map 3049498} \coprod_{i\in I}\left( L\Box_{\Gamma}M_i\right) \rightarrow L\Box_{\Gamma}\left( \coprod_{i\in I} M_i\right) \end{equation}
is an isomorphism.
\end{lemma}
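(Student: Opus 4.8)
The plan is to reduce the statement to the fact that, in the module category $\Mod(A)$, the coproduct functor is exact and therefore commutes with the kernels that define cotensor products. Recall that for a left $\Gamma$-comodule $P$, the cotensor product $L\Box_{\Gamma}P$ is by definition the equalizer (equivalently, the kernel of the difference) of the two $A$-linear structure maps
\[ L\otimes_A P \rightrightarrows L\otimes_A \Gamma\otimes_A P \]
appearing in the equalizer diagram recalled in \cref{Basic ideas...}. So I would first set $P = \coprod_{i\in I}M_i$ and write $L\Box_{\Gamma}P$ explicitly as such a kernel.

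Next I would use the fact that the forgetful functor $G\colon \Comod(\Gamma)\rightarrow \Mod(A)$ is a left adjoint (as noted in \cref{Basic ideas...}, with right adjoint $E$), hence preserves coproducts, so the underlying $A$-module of $\coprod_{i\in I}M_i$ is $\coprod_{i\in I}M_i$ computed in $\Mod(A)$, and the coaction map on the coproduct is the coproduct of the individual coaction maps $\psi_{M_i}$. Since $-\otimes_A X$ is itself a left adjoint for any $A$-module $X$, it too preserves coproducts, giving canonical identifications $L\otimes_A\left(\coprod_i M_i\right)\cong \coprod_i\left(L\otimes_A M_i\right)$ and $L\otimes_A\Gamma\otimes_A\left(\coprod_i M_i\right)\cong \coprod_i\left(L\otimes_A\Gamma\otimes_A M_i\right)$. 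Under these identifications, each of the two structure maps in the equalizer diagram for $P$ becomes the coproduct of the corresponding structure maps for the individual $M_i$, and hence so does their difference.

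Then I would invoke exactness of coproducts in $\Mod(A)$: direct sums are exact in any module category, so the coproduct functor commutes with kernels, i.e. the kernel of a coproduct $\coprod_i f_i$ of $A$-linear maps is the coproduct $\coprod_i \ker(f_i)$. Applying this to the difference map above yields
\[ L\Box_{\Gamma}\left(\coprod_{i\in I}M_i\right)\;\cong\;\coprod_{i\in I}\left(L\Box_{\Gamma}M_i\right) \]
as $A$-modules. The main thing requiring care — and the step I expect to be the only real (if mild) obstacle — is verifying that the isomorphism produced this way is precisely the canonical map \eqref{nat map 3049498}, rather than merely an abstract isomorphism. This is a diagram chase: one checks that the inclusion $L\Box_{\Gamma}M_i\hookrightarrow L\otimes_A M_i$ followed by the coproduct inclusion into $L\otimes_A\left(\coprod_i M_i\right)$ lands in the equalizer and agrees with the structure map defining \eqref{nat map 3049498}, which follows immediately from naturality of the equalizer diagram in the comodule variable. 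All of these identifications are $A$-linear, and since for a bialgebra the cotensor is a comodule subobject by Proposition \ref{A-linear cotensor}, the resulting isomorphism is in fact an isomorphism of $\Gamma$-comodules.
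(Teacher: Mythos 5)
Your proof is correct and is essentially the paper's own argument: both realize the cotensor product as the kernel of the difference of the two structure maps $L\otimes_A P \rightrightarrows L\otimes_A \Gamma\otimes_A P$, use that $-\otimes_A X$ commutes with coproducts and that coproducts are exact in $\Mod(A)$, and then identify the resulting isomorphism on kernels with the canonical map \eqref{nat map 3049498} via a commutative diagram. One quibble: your closing claim that the isomorphism is one of $\Gamma$-comodules reads more into Proposition \ref{A-linear cotensor} than it actually states (it gives $A$-linearity of the inclusion $L\Box_{\Gamma}M_i \hookrightarrow L\otimes_A M_i$, not a comodule structure on the cotensor product), but since the lemma only asserts an isomorphism of $A$-modules, this extra remark costs nothing.
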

\begin{proof}
We have a commutative diagram of $A$-modules with exact rows
\begin{equation}\label{diag 3049999}
\xymatrix{
 0 \ar[r]\ar[d] & \coprod_i L\Box_{\Gamma} M_i \ar[r]\ar[d] & \coprod_i L\otimes_A M_i \ar[r]\ar[d]^{\cong} & \coprod_i L\otimes_A \Gamma\otimes_A M_i \ar[d]^{\cong} \\
 0 \ar[r] & L\Box_{\Gamma} \coprod_i M_i \ar[r] & L\otimes_A \coprod_i M_i \ar[r] & L\otimes_A \Gamma\otimes_A \coprod_i M_i
}\end{equation}
and the maps indicated as isomorphisms are isomorphisms since tensor products, in $A$-modules, commute with coproducts. Hence the remaining vertical map in \eqref{diag 3049999}---i.e., the map \eqref{nat map 3049498}---is also an isomorphism.
\end{proof}

\begin{lemma}\label{easy lemma 2}
Let $(A,\Gamma)$ be a bialgebra, and let
\begin{equation}\label{nat map 3049397} M_0\rightarrow M_1 \rightarrow M_2 \rightarrow\dots\end{equation} be a sequence of morphisms of left $\Gamma$-comodules. Let $L$ be a left $\Gamma$-comodule, and let $n$ be a nonnegative integer. Then the natural group homomorphism
\begin{equation}\label{nat map 3049499} \colim_i\Cotor^n_{\Gamma}(L,M_i)\rightarrow \Cotor^n_{\Gamma}(L,\colim_i M_i) \end{equation}
is an isomorphism.
\end{lemma}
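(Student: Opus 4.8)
The plan is to compute the relative $\Cotor$ groups by means of the canonical (cobar) relative injective resolution, which is functorial and built entirely out of tensor products over $A$, and then to exploit the exactness of filtered colimits in $\Mod(A)$.

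First I would recall that, for a left $\Gamma$-comodule $N$, the cobar resolution
\[ N \rightarrow \Gamma\otimes_A N \rightarrow \Gamma\otimes_A\overline{\Gamma}\otimes_A N\rightarrow \Gamma\otimes_A\overline{\Gamma}\otimes_A\overline{\Gamma}\otimes_A N\rightarrow\cdots,\]
where $\overline{\Gamma}$ denotes the cokernel of the unit $\eta\colon A\rightarrow\Gamma$, is a resolution of $N$ by extended comodules, hence by relative injectives, and that this resolution is natural in $N$. (It is the standard resolution associated to the comonad $\Gamma\otimes_A-$ on $\Comod(\Gamma)$; see Appendix 1 of \cite{MR860042}.) Applying $L\Box_{\Gamma}-$ and using the identity $L\Box_{\Gamma}(\Gamma\otimes_A W)\cong L\otimes_A W$ yields the reduced cobar complex $C^\bullet(L,\Gamma,N)$, with $C^s(L,\Gamma,N)=L\otimes_A\overline{\Gamma}^{\otimes s}\otimes_A N$, whose cohomology computes $\Cotor^*_{\Gamma}(L,N)$. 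By functoriality, the comparison map \eqref{nat map 3049499} is precisely the map induced on cohomology by the evident comparison of cobar complexes, so it suffices to prove the statement at the level of these complexes and their cohomology.

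Next I would observe that each term of the cobar complex, as a functor of $N$, commutes with the sequential colimit: since $-\otimes_A-$ commutes with all colimits in $\Mod(A)$, we have $\colim_i C^s(L,\Gamma,M_i)\cong C^s(L,\Gamma,\colim_i M_i)$ for every $s$, compatibly with the cobar differentials, whence $\colim_i C^\bullet(L,\Gamma,M_i)\cong C^\bullet(L,\Gamma,\colim_i M_i)$ as cochain complexes of $A$-modules. Finally, because the indexing poset $\mathbb{N}$ is filtered and filtered colimits are exact in $\Mod(A)$, the functor $\colim_i$ commutes with the formation of cohomology of a cochain complex; combining these facts gives
\begin{align*}
\colim_i\Cotor^n_{\Gamma}(L,M_i) &=\colim_i H^n\big(C^\bullet(L,\Gamma,M_i)\big)\\
&\cong H^n\big(\colim_i C^\bullet(L,\Gamma,M_i)\big)\\
&\cong H^n\big(C^\bullet(L,\Gamma,\colim_i M_i)\big)\\
&=\Cotor^n_{\Gamma}(L,\colim_i M_i),
\end{align*}
as desired.

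The one point requiring genuine care is the identification of the functorial resolution: I must make sure the cobar resolution really does compute the relative derived functors, and not merely some other cohomology. That is, its terms are relative injectives and it is relatively exact, i.e.\ split exact after applying the forgetful functor $G\colon\Comod(\Gamma)\rightarrow\Mod(A)$. This is exactly what the comonad structure of $\Gamma\otimes_A-$ guarantees, and it is the step where the relative (as opposed to absolute) nature of the homological algebra is essential. Everything else is the formal interplay of tensor products and filtered colimits, both of which are colimit-preserving and exact in $\Mod(A)$.
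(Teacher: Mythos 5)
Your proof is correct, and while it starts from the same computational object as the paper's proof---the cobar complex, whose terms are built from tensor products over $A$ and hence commute with colimits termwise---it finishes by a genuinely different mechanism. The paper first commutes the colimit past the unreduced cobar resolution at the comodule level, and then still has to commute $L\Box_{\Gamma}-$ and cohomology past the colimit; it does this by treating coproducts first (via Lemma \ref{easy lemma 1}) and then handling general sequential colimits with the telescope short exact sequence $\coprod_i M_i \stackrel{\id - T}{\longrightarrow} \coprod_i M_i \rightarrow \colim_i M_i$, the resulting ladder of long exact sequences, and the Five Lemma. You instead use the shearing isomorphism $L\Box_{\Gamma}(\Gamma\otimes_A W)\cong L\otimes_A W$ to identify the cotensored cobar complex with a complex of $A$-modules built purely from tensor products, so that the entire complex computing $\Cotor$ commutes with the colimit, and then you invoke exactness of filtered colimits in $\Mod(A)$ to commute $\colim_i$ with $H^n$. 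Your route is shorter, applies verbatim to arbitrary filtered colimits rather than only sequential ones, needs neither Lemma \ref{easy lemma 1} nor the telescope trick, and---a point worth emphasizing in the relative setting---avoids applying the long exact sequence of relative $\Cotor$ to the telescope sequence, a step that strictly speaking requires that short exact sequence to be allowable (i.e., $A$-split), which is automatic when $A$ is a field but is not addressed in the paper's proof for general $A$. The paper's route, conversely, leans only on formal properties of $\Cotor$ (coproduct preservation and the long exact sequence), which is the standard fallback in abelian categories where filtered colimits need not be exact; in $\Mod(A)$ that concern is vacuous, so nothing is lost by your more direct argument. The one step you rightly flag---that the comparison map \eqref{nat map 3049499}, defined by functoriality of $\Cotor$, coincides with the map induced by the chain-level comparison of cobar complexes---is indeed the point that needs to be said aloud, and having said it, your argument is complete.
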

\begin{proof}
Let $D_{\Gamma}^{\bullet}(M_i)$ denote the unreduced cobar resolution of $M_i$ (as in Definition A1.2.11 of Appendix 1 of \cite{MR860042}, but using $\Gamma$ in place of the unit coideal $\ker \epsilon$). Since $D_{\Gamma}^n(M_i)\cong \Gamma^{\otimes_A (n+1)}\otimes_A M_i$ for each $n$, the natural map of cochain complexes of $A$-modules \begin{equation}\label{nat map 3049500}\colim_i D_{\Gamma}^{\bullet}(M_i)\rightarrow D_{\Gamma}^{\bullet}(\colim_i M_i)\end{equation} is an isomorphism. Since the forgetful functor from $\Gamma$-comodules to $A$-modules is faithful and a left adjoint, it preserves and reflects colimits, so \eqref{nat map 3049500} is in fact an isomorphism of cochain complexes of $\Gamma$-comodules. 

In the case that the colimit is merely a coproduct (i.e., the maps \eqref{nat map 3049397} are split injections), applying Lemma \ref{easy lemma 1} to \eqref{nat map 3049500} yields that 
\begin{equation}\label{nat map 3049495}\coprod_i\Cotor^n_{\Gamma}(L,M_i)\rightarrow \Cotor^n_{\Gamma}(L,\coprod_i M_i)\end{equation} is an isomorphism.

More generally---i.e., when the maps in \eqref{nat map 3049397} are not necessarily split injections---we have the short exact sequence of $\Gamma$-comodules
\begin{equation}\label{nat map 3049494}\coprod_i M_i \stackrel{\id - T}{\longrightarrow} \coprod_i M_i \rightarrow \colim_i M_i \end{equation}
and, applying $\Cotor_{\Gamma}^*(L,-)$ to \eqref{nat map 3049494}, a commutative diagram with exact columns
\[\xymatrix{
 \vdots \ar[d] & 
  \vdots \ar[d] \\
 \coprod_i\Cotor^{n}_{\Gamma}(L, M_i) \ar[r]^{\cong} \ar[d] &
  \Cotor^{n}_{\Gamma}(L, \coprod_iM_i) \ar[d] \\
 \coprod_i\Cotor^{n}_{\Gamma}(L, M_i) \ar[r]^{\cong} \ar[d] &
  \Cotor^{n}_{\Gamma}(L, \coprod_iM_i) \ar[d] \\
 \colim_i\Cotor^{n}_{\Gamma}(L, M_i) \ar[r] \ar[d] &
  \Cotor^{n}_{\Gamma}(L, \colim_iM_i) \ar[d] \\
 \coprod_i\Cotor^{n+1}_{\Gamma}(L, M_i) \ar[r]^{\cong} \ar[d] &
  \Cotor^{n+1}_{\Gamma}(L, \coprod_iM_i) \ar[d] \\
 \coprod_i\Cotor^{n+1}_{\Gamma}(L, M_i) \ar[r]^{\cong} \ar[d] &
  \Cotor^{n+1}_{\Gamma}(L, \coprod_iM_i) \ar[d] \\
 \vdots  & 
  \vdots 
}\]
in which the indicated maps are isomorphisms due to \eqref{nat map 3049495} being an isomorphism. Now the Five Lemma gives us that the remaining map---i.e., \eqref{nat map 3049499}---is an isomorphism.
\end{proof}

\begin{prop}\label{tor sseq}
Let $(A,\Gamma)$ be a bialgebra, and let $M$ be a left $\Gamma$-comodule with exhaustive primitive filtration $M_0 \subseteq M_1 \subseteq \dots$. Then we have a first-quadrant spectral sequence
\begin{align*}
 E^1_{s,t} \cong \Tor_s^A(A\Box_{\Gamma} M(t),N) &\Rightarrow \Tor_s^A(M,N) \\
 d^r: E^r_{s,t} &\rightarrow E^r_{s-1,t+r}.
\end{align*}
\end{prop}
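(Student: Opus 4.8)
The plan is to build the spectral sequence from the filtration of $M$ by the subcomodules $M_t$ given by the primitive filtration, exactly as one builds a filtration spectral sequence in homological algebra. First I would fix the $\Gamma$-comodule $N$ and consider the functor $-\otimes_A N$ applied to the filtration $0 = M_{-1}\subseteq M_0\subseteq M_1\subseteq\dots\subseteq M$; since the filtration is exhaustive, $\colim_t M_t = M$. The associated graded pieces of this filtration are the comodules $M_{t}/M_{t-1}$, which by the short exact sequence \eqref{ses 004042} are isomorphic to $A\Box_{\Gamma} M(t)$. Applying $\Tor_*^A(-,N)$ to the tower $\{M_t\}$ produces an exact couple in the usual way, with the short exact sequences $0\to M_{t-1}\to M_t\to A\Box_\Gamma M(t)\to 0$ supplying the connecting maps, and the spectral sequence of this exact couple has
\[
E^1_{s,t}\cong \Tor_s^A(A\Box_\Gamma M(t),N),
\]
which is the claimed $E^1$-page. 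The differential $d^r\colon E^r_{s,t}\to E^r_{s-1,t+r}$ is the standard indexing for a filtration spectral sequence arising from an increasing filtration, where the filtration degree $t$ increases by $r$ and the homological degree $s$ drops by one.

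The next step is to identify the abutment. Because the filtration is exhaustive and $\Tor_s^A(-,N)$ is a homological functor that commutes with the relevant (filtered) colimits, the spectral sequence converges to $\Tor_s^A(\colim_t M_t, N)=\Tor_s^A(M,N)$. Here I would invoke the general principle, entirely analogous to Lemma \ref{easy lemma 2} but for $\Tor^A$ in place of $\Cotor_\Gamma$, that $\colim_t\Tor_s^A(M_t,N)\cong \Tor_s^A(\colim_t M_t,N)$; this holds because $\Tor$ over $A$ commutes with filtered colimits in its first variable, which one sees by choosing a projective (or flat) resolution and using that tensoring and taking homology both commute with filtered colimits. This guarantees that the filtration on $\Tor_*^A(M,N)$ induced by the $\{M_t\}$ is exhaustive, so the spectral sequence converges conditionally; the first-quadrant property then upgrades this to strong convergence. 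The spectral sequence is first-quadrant because $\Tor_s^A$ vanishes for $s<0$ and the primitive filtration begins at $M_0$, i.e. $t\geq 0$.

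The main obstacle I anticipate is convergence bookkeeping rather than any conceptual difficulty: one must check that the spectral sequence is well-defined and strongly convergent despite the filtration being only exhaustive (a countable union $\bigcup_t M_t=M$) rather than finite. In the first-quadrant setting this is standard—each bidegree $(s,t)$ receives only finitely many incoming and outgoing differentials, so $E^r_{s,t}$ stabilizes for large $r$ and the extension problem is governed by a finite filtration in each total degree—but it deserves an explicit appeal to the commutation of $\Tor$ with filtered colimits to rule out a $\lim^1$ obstruction. I would also take care that the identification $M_t/M_{t-1}\cong A\Box_\Gamma M(t)$ from \eqref{ses 004042} is used at the level of $A$-modules, which is all that is needed since $\Tor^A$ only sees the underlying $A$-module structure. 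Everything else is the routine construction of the spectral sequence of a filtered object, and I would present it by citing the standard exact-couple machinery rather than reconstructing it in detail.
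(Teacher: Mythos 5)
Your proposal is correct and takes essentially the same route as the paper: the paper's entire proof is the observation that this is the spectral sequence of the exact couple obtained by applying $\Tor_*^A(-,N)$ to the tower of extensions $0\rightarrow M_{t-1}\rightarrow M_t \rightarrow A\Box_{\Gamma}M(t)\rightarrow 0$ coming from the primitive filtration. Your additional convergence bookkeeping (exhaustiveness of the filtration together with the commutation of $\Tor$ with filtered colimits, in the spirit of Lemma \ref{easy lemma 2}) is sound and simply makes explicit what the paper leaves implicit.
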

\begin{proof}
This is simply the spectral sequence of the exact couple obtained by applying $\Tor_*^A(-,N)$ to the tower of extensions of $A$-modules
\[\xymatrix{
 M_{-1}\ar[r] & M_0 \ar[r]\ar[d] & M_1 \ar[r]\ar[d] & M_2 \ar[r]\ar[d] & \dots \\
 & A\Box_{\Gamma}M(0) & A\Box_{\Gamma}M(1)& A\Box_{\Gamma}M(2) . &
}\]
\end{proof}

Of course, in most motivating examples of bialgebras $(A,\Gamma)$, $A$ is a field and consequently the spectral sequence of Proposition \ref{tor sseq} collapses to the $s=0$ line already at the $E^1$-page, yielding an unsurprising isomorphism. So the spectral sequence of Proposition \ref{tor sseq} is not our focus at all, and we mention it only for completeness. A much more interesting spectral sequence is given by the following:
\begin{theorem}\label{cotor sseq}{\bf (The K\"{u}nneth spectral sequence for Cotor.)} 
Let $(A,\Gamma)$ be a bialgebra, and let $L$ be a right $\Gamma$-comodule and $M,N$ left $\Gamma$-comodules. Suppose that $M$ is flat over $A$, and suppose that $N$ has exhaustive primitive filtration.

Then we have a 
first quadrant spectral sequence
\begin{align}
\label{kunneth ss 1} E_1^{s,t} \cong \Cotor^s_{\Gamma}(L,M)\otimes_A \left( A\Box_{\Gamma} N(t)\right) &\Rightarrow \Cotor^s_{\Gamma}(L,M\otimes_A N)\\
\nonumber d_r: E_r^{s,t} &\rightarrow E_r^{s+1,t-r}.
\end{align}
If the primitive filtration on $N$ is also split (for example, if $A$ is a field), then the $E_1$-term of \eqref{kunneth ss 1} is also given by
$E_1^{s,t} \cong \Cotor^s_{\Gamma}(L,M)\otimes_A N^t$
where $N^t = A\Box_{\Gamma}N(t)$ is the degree $t$ summand in the primitive grading on $N$, as in Definition-Proposition \ref{main defs}. 
\end{theorem}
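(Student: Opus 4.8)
The plan is to produce the spectral sequence as that of the filtered comodule $M\otimes_A N$, where the filtration is obtained by tensoring $M$ into the primitive filtration of $N$; the point of using the \emph{primitive} filtration is that its subquotients have trivial coaction, which is exactly the situation in which \Cref{The case when one comodule...} lets us compute $\Cotor$. First I would tensor the primitive filtration $0 = N_{-1}\subseteq N_0\subseteq N_1\subseteq\dots$ of $N$ with $M$. Since $M$ is flat over $A$, applying $M\otimes_A-$ to the short exact sequences \eqref{ses 004042} preserves exactness and monomorphisms, so $\{M\otimes_A N_t\}_{t\ge 0}$ is an increasing filtration of $M\otimes_A N$ by sub-$\Gamma$-comodules with short exact sequences of $\Gamma$-comodules
\[ 0\to M\otimes_A N_{t-1}\to M\otimes_A N_t\to M\otimes_A\big(A\Box_\Gamma N(t)\big)\to 0 . \]
This filtration is exhaustive because $M\otimes_A-$ commutes with the colimit $\colim_t N_t = N$, and bounded below because $M\otimes_A N_{-1}=0$. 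Crucially, $A\Box_\Gamma N(t)$ has trivial coaction, so the comodule structure on the subquotient $M\otimes_A(A\Box_\Gamma N(t))$ is induced from $M$ alone.

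Next I would apply the relative derived functors $\Cotor_\Gamma^*(L,-)$ to this tower and assemble the resulting long exact sequences into an exact couple with $D^{s,t} = \Cotor_\Gamma^s(L, M\otimes_A N_t)$ and $E_1^{s,t} = \Cotor_\Gamma^s\big(L, M\otimes_A(A\Box_\Gamma N(t))\big)$. Tracking the three structure maps of the couple---the map induced by $N_{t-1}\hookrightarrow N_t$, the map induced by $N_t\twoheadrightarrow A\Box_\Gamma N(t)$, and the connecting homomorphism---shows that the differential has the asserted bidegree $d_r\colon E_r^{s,t}\to E_r^{s+1,t-r}$. Since $\Cotor_\Gamma^s$ vanishes for $s<0$ and the filtration index is nonnegative, this is a first-quadrant spectral sequence. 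Convergence to $\Cotor_\Gamma^s(L, M\otimes_A N)$ follows from exhaustiveness of the filtration together with \Cref{easy lemma 2}, which identifies the abutment $\colim_t\Cotor_\Gamma^s(L, M\otimes_A N_t)$ with $\Cotor_\Gamma^s(L, M\otimes_A N)$; the only point requiring care is the behavior of the differentials entering a fixed bidegree from arbitrarily high filtration, which is controlled by exhaustiveness.

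The crux---and the step I expect to be the main obstacle---is the simplification of the $E_1$-page. Here I would apply Corollary~\ref{al-takhman cor} to the trivial comodule $A\Box_\Gamma N(t)$ to obtain
\[ \Cotor_\Gamma^s\big(L, M\otimes_A(A\Box_\Gamma N(t))\big)\cong\Cotor_\Gamma^s(L,M)\otimes_A\big(A\Box_\Gamma N(t)\big), \]
which is the claimed $E_1$-term. The subtlety is that Corollary~\ref{al-takhman cor} is stated for a trivial comodule that is \emph{flat} over $A$: flatness is used both to keep a relative injective resolution of $M$ exact after tensoring and, more essentially, in the final universal-coefficient step that moves the tensor factor past the cohomology. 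In the motivating topological examples this is free, since there $A$ is a field, every module is flat, and the primitive filtration is automatically split; but for a general bialgebra one must either assume the primitive subquotients $A\Box_\Gamma N(t)$ are flat, or replace the abstract resolution of $M$ by the cobar resolution $D_\Gamma^\bullet(M)$. The latter is the natural fix: when $M$ and $\Gamma$ are flat, $D_\Gamma^n(M)=\Gamma^{\otimes_A(n+1)}\otimes_A M$ is a flat relative injective, its augmentation is split over $A$ via the counit of $\Gamma$, and this splitting makes the isomorphism \eqref{map 2} of Proposition~\ref{al-takhman result} hold even for non-flat coefficients. I expect isolating exactly where flatness of $M$, as opposed to flatness of the subquotients, suffices to be the delicate heart of the argument.

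Finally, the split refinement is immediate. When the primitive filtration is split, Definition-Proposition~\ref{main defs} gives an $A$-module isomorphism $N\cong\coprod_t A\Box_\Gamma N(t)$ whose degree-$t$ summand is $N^t = A\Box_\Gamma N(t)$, so the $E_1$-term $\Cotor_\Gamma^s(L,M)\otimes_A(A\Box_\Gamma N(t))$ may simply be rewritten as $\Cotor_\Gamma^s(L,M)\otimes_A N^t$; over a field both the splitting and the requisite flatness hold automatically.
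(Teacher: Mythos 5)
Your proposal is correct and is essentially the paper's own proof: the same filtration $\{M\otimes_A N_t\}$ obtained by tensoring the primitive filtration with the flat comodule $M$, the same exact couple, the same identification of the $E_1$-page via Corollary \ref{al-takhman cor} applied to the trivial comodules $A\Box_{\Gamma}N(t)$, and the same identification of the abutment $\colim_t \Cotor^s_{\Gamma}(L,M\otimes_A N_t)\cong \Cotor^s_{\Gamma}(L,M\otimes_A N)$ via Lemma \ref{easy lemma 2}. The flatness subtlety you flag is real, but it is a gap in the paper's argument rather than in yours---the paper invokes Corollary \ref{al-takhman cor} for $A\Box_{\Gamma}N(t)$ although no flatness of the primitive subquotients is hypothesized---and note that your cobar-resolution fix repairs the resolution and cotensor steps (which in fact already work because relative-injective resolutions are $A$-split and the inclusion $L\Box_{\Gamma}D^n_{\Gamma}(M)\hookrightarrow L\otimes_A D^n_{\Gamma}(M)$ is $A$-split for extended comodules) but not the final universal-coefficient step of moving $-\otimes_A\left(A\Box_{\Gamma}N(t)\right)$ past cohomology, which still requires flatness of the subquotients and is automatic exactly in the split/field case you single out.
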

\begin{proof}
Since $M$ is flat, applying $M\otimes_A -$ to the primitive filtration on $N$ yields a tower of extensions of left $\Gamma$-comodules
\[\xymatrix{
 0 = M\otimes_A N_{-1} \ar[r] & M\otimes_A N_0 \ar[r]\ar[d] & M\otimes_A N_1 \ar[r]\ar[d] & \dots \\
 & M\otimes_A \left( A\Box_{\Gamma}N(0)\right) &  M\otimes_A \left( A\Box_{\Gamma}N(1)\right) & 
}\]
such that $\colim_{n\rightarrow\infty} M\otimes_A N_n\rightarrow M\otimes_A N$ is an isomorphism,
and consequently a spectral sequence 
\begin{align}
\label{sseq 03940439} E_1^{s,t} \cong \Cotor^s_{\Gamma}\left(L,M\otimes_A \left( A\Box_{\Gamma}N(t)\right)\right) &\Rightarrow \colim_i \Cotor^s_{\Gamma}(L,M\otimes_A N_i)\\
\nonumber d_r: E_r^{s,t} &\rightarrow E_r^{s+1,t-r}.
\end{align}
Recall that $A\Box_{\Gamma} N(t)$ has a natural left $\Gamma$-coaction given by Proposition \ref{A-linear cotensor}, since $(A,\Gamma)$ is a bialgebra, but that left $\Gamma$-coaction is clearly the trivial one, since every element of $A\Box_{\Gamma} N(t)$ is primitive.
Triviality of the coaction is what allows us to apply Corollary \ref{al-takhman cor}, to get that $\Cotor^s_{\Gamma}\left(L,M\otimes_A \left(A\Box_{\Gamma}N(t)\right)\right)\cong \Cotor^s_{\Gamma}(L,M)\otimes_A \left( A\Box_{\Gamma}N(t)\right)$ for each $s,t$.
Finally, the abutment $\colim_i \Cotor^*_{\Gamma}(L,M\otimes_A N_i)$ of
spectral sequence \eqref{sseq 03940439} is isomorphic to 
\[ \Cotor^*_{\Gamma}\left(L,\colim_i (M\otimes_A N_i)\right)) \cong \Cotor^*_{\Gamma}(L,M\otimes_A N)\] 
by Lemma \ref{easy lemma 2}.
\end{proof}

In case it helps the reader to visualize the spectral sequence, we provide a picture of a portion of the $E_1$-page of spectral sequence of Theorem \ref{cotor sseq}, with $s$ as the horizontal coordinate, $t$ as the vertical coordinate, the $d_1$-differentials colored in red, the $d_2$-differentials colored in orange, and the $d_3$-differentials colored in blue:
\noindent
\begin{equation}\label{ss picture 1}\begin{tikzpicture}[trim left=0cm,xscale=3.8,yscale=0.7]
\draw[->,color=red] (0,1) -- ($(0,1)!0.9!(1,0)$) node{};
\draw[->,color=red] (0,2) -- ($(0,2)!0.9!(1,1)$) node{};
\draw[->,color=red] (0,3) -- ($(0,3)!0.9!(1,2)$) node{};
\draw[->,color=red] (1,1) -- ($(1,1)!0.9!(2,0)$) node{};
\draw[->,color=red] (1,2) -- ($(1,2)!0.9!(2,1)$) node{};
\draw[->,color=red] (1,3) -- ($(1,3)!0.9!(2,2)$) node{};
\draw[->,color=red] (2,1) -- ($(2,1)!0.9!(3,0)$) node{};
\draw[->,color=red] (2,2) -- ($(2,2)!0.9!(3,1)$) node{};
\draw[->,color=red] (2,3) -- ($(2,3)!0.9!(3,2)$) node{};
\draw[->,color=orange] (0,2) -- ($(0,2)!0.9!(1,0)$) node{};
\draw[->,color=orange] (0,3) -- ($(0,3)!0.9!(1,1)$) node{};
\draw[->,color=orange] (1,2) -- ($(1,2)!0.9!(2,0)$) node{};
\draw[->,color=orange] (1,3) -- ($(1,3)!0.9!(2,1)$) node{};
\draw[->,color=orange] (2,2) -- ($(2,2)!0.9!(3,0)$) node{};
\draw[->,color=orange] (2,3) -- ($(2,3)!0.9!(3,1)$) node{};
\draw[->,color=blue] (0,3) -- ($(0,3)!0.9!(1,0)$) node{};
\draw[->,color=blue] (1,3) -- ($(1,3)!0.9!(2,0)$) node{};
\draw[->,color=blue] (2,3) -- ($(2,3)!0.9!(3,0)$) node{};
\draw (-0.35,3.5) -- (-0.35,-0.35) -- (3.5,-0.35);
\draw (-0.5,0) node{$t=0$};
\draw (-0.5,1) node{$t=1$};
\draw (-0.5,2) node{$t=2$};
\draw (-0.5,3) node{$t=3$};
\draw (0,-0.6) node{$s=0$};
\draw (1,-0.6) node{$s=1$};
\draw (2,-0.6) node{$s=2$};
\draw (3,-0.6) node{$s=3$};
\draw (0,0) node{$(L\Box_{\Gamma}M)\otimes_A N^0$};
\draw (1,0) node{$\Cotor^1_{\Gamma}(L,M)\otimes_A N^0$};
\draw (2,0) node{$\Cotor^2_{\Gamma}(L,M)\otimes_A N^0$};
\draw (3,0) node{$\Cotor^3_{\Gamma}(L,M)\otimes_A N^0$};
\draw (3.6,0) node{$\dots$};
\draw (0,1) node{$(L\Box_{\Gamma}M)\otimes_A N^1$};
\draw (1,1) node{$\Cotor^1_{\Gamma}(L,M)\otimes_A N^1$};
\draw (2,1) node{$\Cotor^2_{\Gamma}(L,M)\otimes_A N^1$};
\draw (3,1) node{$\Cotor^3_{\Gamma}(L,M)\otimes_A N^1$};
\draw (3.6,1) node{$\dots$};
\draw (0,2) node{$(L\Box_{\Gamma}M)\otimes_A N^2$};
\draw (1,2) node{$\Cotor^1_{\Gamma}(L,M)\otimes_A N^2$};
\draw (2,2) node{$\Cotor^2_{\Gamma}(L,M)\otimes_A N^2$};
\draw (3,2) node{$\Cotor^3_{\Gamma}(L,M)\otimes_A N^2$};
\draw (3.6,2) node{$\dots$};
\draw (0,3) node{$(L\Box_{\Gamma}M)\otimes_A N^3$};
\draw (1,3) node{$\Cotor^1_{\Gamma}(L,M)\otimes_A N^3$};
\draw (2,3) node{$\Cotor^2_{\Gamma}(L,M)\otimes_A N^3$};
\draw (3,3) node{$\Cotor^3_{\Gamma}(L,M)\otimes_A N^3$};
\draw (3.6,3) node{$\dots$};
\draw (0,4) node{$\vdots$};
\draw (1,4) node{$\vdots$};
\draw (2,4) node{$\vdots$};
\draw (3,4) node{$\vdots$};
\end{tikzpicture} \end{equation}
In \eqref{ss picture 1} we write $N^i$ rather than $A\Box_{\Gamma}N(i)$ as though the primitive filtration on $N$ is split, but this is only for notational convenience, to make the illustration of the spectral sequence more readable. If the primitive filtration on $N$ is not split, then replace all the instances of $N^i$ in diagram \eqref{ss picture 1} with $A\Box_{\Gamma}N(i)$, and the resulting diagram remains a correct picture of the spectral sequence.

This convention for drawing the spectral sequence in \eqref{ss picture 1}, in particular the choice of horizontal and vertical coordinates, is convenient because the bidegrees in the $s$-column are precisely those bidegrees which contribute, in the $E_{\infty}$-page, to $\Cotor^s_{\Gamma}(L,M\otimes_A N)$. 

Theorem \ref{cotor sseq} has corollaries:
\begin{corollary}\label{cotensor kunneth formula}
Let $(A,\Gamma)$ be a bialgebra, and let $L$ be a right $\Gamma$-comodule and $M,N$ left $\Gamma$-comodules. Suppose that $M$ is flat over $A$, and suppose that $N$ has split exhaustive primitive filtration. 
Then the $A$-module $L\Box_{\Gamma}(M\otimes_A N)$ is isomorphic to the sub-$A$-module of $(L\Box_{\Gamma}M) \otimes_A N$ consisting of the elements in the $s=0$-column in the kernel of the $d_r$ differential for every $r\geq 1$.
\end{corollary}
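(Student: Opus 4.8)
The plan is to read off the $s=0$ column of the spectral sequence \eqref{kunneth ss 1} of Theorem \ref{cotor sseq} and compare it with the abutment filtration on $\Cotor^0_{\Gamma}(L,M\otimes_A N) = L\Box_{\Gamma}(M\otimes_A N)$. The first observation is purely formal: since the spectral sequence is first-quadrant and its differentials $d_r\colon E_r^{s,t}\to E_r^{s+1,t-r}$ strictly raise the $s$-coordinate, no nonzero differential has its target in the $s=0$ column (such a differential would have to originate in the nonexistent column $s=-1$). Consequently the $s=0$ column only emits differentials and never receives them, so for each $t$ the group $E_\infty^{0,t}$ is literally a submodule of $E_1^{0,t}\cong (L\Box_{\Gamma}M)\otimes_A N^t$, namely the permanent cycles, i.e.\ the elements lying in $\ker d_r$ for every $r\ge 1$. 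Summing over $t$ and using the split exhaustive primitive filtration to identify $\bigoplus_t N^t\cong N$ as $A$-modules (so that $\bigoplus_t E_1^{0,t}\cong (L\Box_{\Gamma}M)\otimes_A N$), I would conclude that $\bigoplus_t E_\infty^{0,t}$ is exactly the sub-$A$-module of $(L\Box_{\Gamma}M)\otimes_A N$ described in the statement.

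Second, I would identify the abutment filtration explicitly. Applying the left-exact functor $L\Box_{\Gamma}(-)$ to the inclusions $M\otimes_A N_t\hookrightarrow M\otimes_A N$, which are injective because $M$ is flat, shows that $F_t := L\Box_{\Gamma}(M\otimes_A N_t)\hookrightarrow L\Box_{\Gamma}(M\otimes_A N)$; and the $n=0$ case of Lemma \ref{easy lemma 2} (equivalently, left-exactness together with $M\otimes_A N=\colim_t M\otimes_A N_t$) shows that this increasing filtration is exhaustive. By the convergence of \eqref{kunneth ss 1}, the associated graded of $\{F_t\}$ on $\Cotor^0_{\Gamma}(L,M\otimes_A N)$ is precisely $\bigoplus_t E_\infty^{0,t}$, the permanent-cycle submodule found in the first step.

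The remaining and genuinely load-bearing step is to upgrade this identification of the associated graded to an honest isomorphism $L\Box_{\Gamma}(M\otimes_A N)\cong \bigoplus_t E_\infty^{0,t}$ of $A$-modules, i.e.\ to split the abutment filtration $\{F_t\}$. This is where I expect the main obstacle to lie: the $A$-linear splitting of the primitive filtration of $N$ does not obviously descend to a splitting here, since the cotensor product is functorial only for comodule maps, whereas the sections $N^t\to N_t$ supplied by the split hypothesis are merely $A$-linear. When $A$ is a field the obstacle evaporates, because every short exact sequence of vector spaces splits and an exhaustive filtration with $F_{-1}=0$ is then automatically the direct sum of its graded pieces; this already covers the motivating topological examples. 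More generally I would argue that the filtration splits whenever the graded pieces $E_\infty^{0,t}$ are projective over $A$ (choosing compatible sections by induction on $t$ and passing to the colimit), and I would isolate that projectivity bookkeeping as the only point requiring care beyond the field case.
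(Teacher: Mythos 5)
Your proposal takes the same route as the paper — indeed the paper supplies no written proof at all, presenting this corollary as immediate from Theorem \ref{cotor sseq} — and your first two steps are exactly that implicit argument: since $d_r\colon E_r^{s,t}\to E_r^{s+1,t-r}$ strictly increases $s$, nothing can land in the $s=0$ column, so $E_\infty^{0,t}$ is the permanent-cycle submodule of $E_1^{0,t}\cong(L\Box_{\Gamma}M)\otimes_A N^t$; and convergence identifies $\bigoplus_t E_\infty^{0,t}$ with the associated graded of the exhaustive filtration $F_t=L\Box_{\Gamma}(M\otimes_A N_t)$ of the abutment.

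The obstacle you isolate in your final paragraph is genuine, and it is the crux of the matter. The corollary asserts that the abutment itself, not merely its associated graded, is isomorphic to the permanent-cycle submodule, and this requires the extensions $0\to F_{t-1}\to F_t\to E_\infty^{0,t}\to 0$ to be resolved trivially. You are right that the splitness hypothesis on the primitive filtration of $N$ does not formally transfer: the $A$-linear projection $N_t\to N_{t-1}$ does not carry $F_t$ into $F_{t-1}$, because the equalizer condition defining the cotensor product mixes the $N^u$-components of an element (the lower components of an element of $F_t$ are constrained by its top component and are not themselves cotensor elements in general). The paper's only comment in this direction — the remark immediately following the corollary — addresses conditional convergence (colimit versus limit organization of the filtration), not these stage-wise extension problems, so the paper supplies no mechanism for splitting them either. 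Your resolution is correct where you give it: over a field, an exhaustive filtration of a vector space with $F_{-1}=0$ is abstractly isomorphic to its associated graded, and more generally projectivity of each $E_\infty^{0,t}$ lets one choose compatible sections by induction on $t$ and pass to the colimit. That covers the case the paper itself highlights parenthetically (``for example, if $A$ is a field'') and every topological application in the paper, where $A=\mathbb{F}_p$. So the verdict is: same approach as the paper, correct in the cases you claim, and the step you flag as needing care should be read not as a defect of your write-up but as a hypothesis the corollary silently needs for a general commutative ring $A$; with ``split'' meaning only that the primitive filtration of $N$ splits $A$-linearly, neither your argument nor anything in the paper proves the statement in that generality.
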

Note that, unlike the Adams spectral sequence (whose convergence properties are discussed in \cite{MR1718076}), there is no issue of conditional convergence in this spectral sequence which could cause $N = \oplus_{n\geq 0}N^n$ in the $E_{\infty}$-page to become $\prod_{n\geq 0} N^n$ in the abutment. This is because the spectral sequence of Theorem \ref{cotor sseq} converges to the colimit, and so the extension problems are organized into a colimit sequence rather than a limit sequence.

\begin{corollary}\label{gysin seq cor}
Let $(A,\Gamma)$ be a bialgebra, and let $L$ be a right $\Gamma$-comodule and $M,N$ left $\Gamma$-comodules. Suppose that $M$ is flat over $A$ and that $N$ has primitive filtration of length $2$, i.e., the quotient $\Gamma$-comodule $N/\left( A\Box_{\Gamma}N\right)$ has trivial $\Gamma$-coaction. Then the spectral sequence of Theorem \ref{cotor sseq} degenerates to a 
long exact sequence
\noindent
\begin{equation*}
\xymatrix{
 && 0 \ar`r_l[ll] `l[dll] [dll]   \\(L\Box_{\Gamma}M)\otimes_A (A\Box_{\Gamma}N) \ar[r] & 
  L\Box_{\Gamma}(M\otimes_AN) \ar[r] &
  (L\Box_{\Gamma}M)\otimes_A \left( N/(A\Box_{\Gamma}N)\right) \ar`r_l[ll] `l[dll] [dll] \\
  \Cotor^1_{\Gamma}(L,M)\otimes_A (A\Box_{\Gamma}N) \ar[r] &
  \Cotor^1_{\Gamma}(L,M\otimes_A N) \ar[r] &
  \Cotor^1_{\Gamma}(L,M)\otimes_A \left( N/(A\Box_{\Gamma}N)\right) \ar`r_l[ll] `l[dll] [dll] \\
  \Cotor^2_{\Gamma}(L,M)\otimes_A (A\Box_{\Gamma}N) \ar[r] &
  \Cotor^2_{\Gamma}(L,M\otimes_A N) \ar[r] &
  \dots .}\end{equation*}
\end{corollary}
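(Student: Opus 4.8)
The plan is to apply the spectral sequence of Theorem \ref{cotor sseq} and show that the length-$2$ hypothesis forces its $E_1$-page into only two rows, so that it collapses at $E_2$ and unfolds into the displayed long exact sequence in the standard way that a two-row spectral sequence always does.

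First I would unwind the primitive grading of $N$. By definition $N(0) = N$, so $A\Box_{\Gamma}N(0) = A\Box_{\Gamma}N$; and $N(1) = N/(A\Box_{\Gamma}N)$, which by hypothesis has trivial coaction, so that $A\Box_{\Gamma}N(1) = N(1) = N/(A\Box_{\Gamma}N)$ and the inclusion $A\Box_{\Gamma}N(1)\hookrightarrow N(1)$ is the identity. Hence $N(2) = 0$, and therefore $A\Box_{\Gamma}N(t) = 0$ for all $t\geq 2$. Since $N_1 = N$, the primitive filtration of $N$ is (trivially) exhaustive, so, $M$ being flat over $A$, Theorem \ref{cotor sseq} supplies a spectral sequence with $E_1^{s,t}\cong \Cotor^s_{\Gamma}(L,M)\otimes_A\bigl(A\Box_{\Gamma}N(t)\bigr)$; by the computation just made this is concentrated in the two rows $t=0$ and $t=1$, with entries $\Cotor^s_{\Gamma}(L,M)\otimes_A(A\Box_{\Gamma}N)$ and $\Cotor^s_{\Gamma}(L,M)\otimes_A\bigl(N/(A\Box_{\Gamma}N)\bigr)$ respectively.

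Next I would check that all higher differentials vanish. The differential $d_r$ has the form $E_r^{s,t}\rightarrow E_r^{s+1,t-r}$, so for both source and target to be nonzero one needs $0\leq t\leq 1$ together with $0\leq t-r\leq 1$, which forces $r=1$ and $t=1$. Thus the only possibly-nonzero differential is $d_1\colon E_1^{s,1}\rightarrow E_1^{s+1,0}$, the spectral sequence collapses at $E_2$, and $E_\infty^{s,1} = \ker\bigl(d_1\colon E_1^{s,1}\rightarrow E_1^{s+1,0}\bigr)$ while $E_\infty^{s,0}$ is the cokernel of $d_1\colon E_1^{s-1,1}\rightarrow E_1^{s,0}$.

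Finally I would assemble the long exact sequence. The two-step filtration $0\subseteq N_0\subseteq N_1 = N$ induces a two-step filtration on the abutment, giving short exact sequences $0\rightarrow E_\infty^{s,0}\rightarrow \Cotor^s_{\Gamma}(L,M\otimes_A N)\rightarrow E_\infty^{s,1}\rightarrow 0$ with the $t=0$ piece as subobject; no convergence or $\lim^1$ subtlety arises, as remarked after Corollary \ref{cotensor kunneth formula}, because the abutment is a colimit. Splicing these short exact sequences along the connecting maps $d_1$, and using the above descriptions of $E_\infty^{s,0}$ and $E_\infty^{s,1}$, produces exactly the displayed sequence: the map $E_1^{s,0}\rightarrow \Cotor^s_{\Gamma}(L,M\otimes_A N)$ is the composite $E_1^{s,0}\twoheadrightarrow E_\infty^{s,0}\hookrightarrow\Cotor^s$, the map $\Cotor^s\rightarrow E_1^{s,1}$ is $\Cotor^s\twoheadrightarrow E_\infty^{s,1}\hookrightarrow E_1^{s,1}$, and the boundary map is $d_1$. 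The leading injection $0\rightarrow (L\Box_{\Gamma}M)\otimes_A(A\Box_{\Gamma}N)\rightarrow L\Box_{\Gamma}(M\otimes_A N)$ is automatic: since the spectral sequence is first-quadrant there is no incoming $d_1$ at the corner, whence $E_\infty^{0,0} = E_1^{0,0}$ sits inside the abutment. The only real work is the bookkeeping of which filtration step is the subobject and the routine verification of exactness at each spot, both of which are entirely mechanical once the page is known to have two rows; I expect no genuine obstacle.
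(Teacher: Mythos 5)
Your proposal is correct and is exactly the argument the paper intends: the paper states Corollary \ref{gysin seq cor} without proof, treating it as the standard degeneration of the spectral sequence of Theorem \ref{cotor sseq} when the primitive filtration gives only the two rows $t=0$ (with $A\Box_{\Gamma}N(0)=A\Box_{\Gamma}N$) and $t=1$ (with $A\Box_{\Gamma}N(1)=N/(A\Box_{\Gamma}N)$ by triviality of the coaction), forcing $N(2)=0$, collapse at $E_2$, and the usual splicing of the two-row $E_\infty$ filtration into a long exact sequence. Your identification of which filtration piece is the subobject and your observation that convergence is unproblematic (finite, colimit-type filtration) are precisely the details the paper leaves to the reader.
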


\begin{definition}
Given a commutative ring $A$, we will say that an $A$-module $M$ {\em detects nontriviality} if, whenever $N$ is an $A$-module such that $M\otimes_AN \cong 0$, we have $N\cong 0$.
\end{definition}
For example, 
over any commutative ring $A$, nonzero free $A$-modules detect nontriviality. If $A$ is a field, every nonzero $A$-module detects nontriviality. If $A$ is a discrete valuation ring with maximal ideal $\mathfrak{m}$ and fraction field $K$, then $K\oplus A/\mathfrak{m}A$ detects nontriviality. 

One of the most important consequences of Theorem \ref{cotor sseq} is Corollary \ref{converse to al-takhman lemma}:
\begin{corollary}\label{converse to al-takhman lemma}
Let $(A,\Gamma)$ be a bialgebra, and let $L$ be a right $\Gamma$-comodule and $M,N$ left $\Gamma$-comodules. Suppose that the $A$-module $L\Box_{\Gamma}M$ detects nontriviality, suppose that $N$ has split exhaustive primitive filtration, suppose that the canonical map 
\begin{equation}\label{kunneth iso 2}(L\Box_{\Gamma}M)\otimes_A (A\Box_{\Gamma}N) \rightarrow  L\Box_{\Gamma}(M\otimes_AN)\end{equation} is an isomorphism, and suppose that the canonical map 
\begin{equation}\label{kunneth map 1}\Cotor_{\Gamma}^1(L,M)\otimes_A (A\Box_{\Gamma}N) \rightarrow  \Cotor_{\Gamma}^1(L,M\otimes_AN)\end{equation} is injective. 
Then $N$ has trivial coaction.
\end{corollary}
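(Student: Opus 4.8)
The plan is to use the K\"{u}nneth spectral sequence of Theorem \ref{cotor sseq} and show that the two isomorphism/injectivity hypotheses on the $s=0$ and $s=1$ columns force the primitive filtration on $N$ to collapse after a single step, i.e.\ $N = A\Box_{\Gamma}N = N^0$, which is exactly the assertion that $N$ has trivial coaction. Since the filtration is split and exhaustive, $N \cong \coprod_{t\geq 0} N^t$ as $A$-modules with $N^t = A\Box_{\Gamma}N(t)$, so triviality of the coaction is equivalent to $N^t = 0$ for all $t \geq 1$. Thus the real target is to prove $N^t \cong 0$ for every $t\geq 1$.

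First I would translate the two hypotheses into statements about the spectral sequence. The $E_1$-page has $E_1^{s,t}\cong \Cotor^s_{\Gamma}(L,M)\otimes_A N^t$, and the differentials $d_r\colon E_r^{s,t}\to E_r^{s+1,t-r}$ strictly decrease $t$. In particular nothing maps \emph{into} the bottom row $t=0$, so $E_\infty^{s,0}$ is a quotient of $E_1^{s,0}$; and in the $s=0$ column the only differentials are those emanating \emph{from} $E_r^{0,t}$, so $E_\infty^{0,t}$ is a subgroup of $E_1^{0,t}=(L\Box_{\Gamma}M)\otimes_A N^t$. The canonical maps \eqref{kunneth iso 2} and \eqref{kunneth map 1} are, by Corollary \ref{cotensor kunneth formula} and the identification of the abutment, precisely the edge maps identifying the bottom entries $E_\infty^{0,0}$ and $E_\infty^{1,0}$ with the corresponding corners $E_1^{0,0}$ and $E_1^{1,0}$. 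The hypothesis that \eqref{kunneth iso 2} is an isomorphism says $E_\infty^{0,0}=E_1^{0,0}$, so no differential can leave $E_r^{0,0}$; more usefully, by comparing total associated graded pieces in the $s=0$ column, it says that the contribution of $E_\infty^{0,t}$ for $t\geq 1$ to $L\Box_{\Gamma}(M\otimes_A N)$ beyond the $(L\Box_{\Gamma}M)\otimes_A(A\Box_{\Gamma}N)$ summand vanishes, i.e.\ $E_\infty^{0,t}=0$ for all $t\geq 1$.

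Next I would run the inductive argument on $t$. Fix $t\geq 1$ and assume $N^{t'}=0$, equivalently $E_1^{*,t'}=0$, for all $1\le t'<t$. Because all lower rows between $1$ and $t-1$ have vanished, every differential with target in total degree contributing to $\Cotor^1$ and source in row $t$ is forced: the class in $E_1^{0,t}$ can only support a $d_t$ differential landing in $E_t^{1,0}$ (all intermediate rows being zero), and no differential can hit $E_r^{0,t}$. Thus $E_\infty^{0,t}=\ker\bigl(d_t\colon E_1^{0,t}\to E_1^{1,0}\bigr)$, and the image of this $d_t$ is exactly the kernel of the edge map $E_\infty^{1,0}\hookrightarrow E_1^{1,0}$ measuring failure of injectivity of \eqref{kunneth map 1}. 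The hypothesis that \eqref{kunneth map 1} is injective forces that image to be zero, hence $d_t$ is zero on $E_1^{0,t}$, hence $E_\infty^{0,t}=E_1^{0,t}=(L\Box_{\Gamma}M)\otimes_A N^t$. But the isomorphism hypothesis on \eqref{kunneth iso 2} gave $E_\infty^{0,t}=0$. Therefore $(L\Box_{\Gamma}M)\otimes_A N^t\cong 0$, and since $L\Box_{\Gamma}M$ detects nontriviality, $N^t\cong 0$. This completes the induction and yields $N^t=0$ for all $t\geq 1$, i.e.\ $N$ has trivial coaction.

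The main obstacle I anticipate is the bookkeeping in the second-to-last step: one must verify carefully that, \emph{with all intervening rows vanishing}, the only possible differential out of $E_1^{0,t}$ relevant to the $\Cotor^1$ contribution is the single $d_t$ into $E_t^{1,0}$, and that its image is genuinely the obstruction to injectivity of \eqref{kunneth map 1} rather than some other extension or differential phenomenon. This requires tracking the two edge homomorphisms precisely and confirming that "detects nontriviality" is applied to the correct tensor factor $L\Box_{\Gamma}M$ (which is why that hypothesis appears). The coproduct-compatibility furnished by Lemma \ref{easy lemma 1} and the colimit-compatibility of Lemma \ref{easy lemma 2} should guarantee that these identifications are natural and that no completion/extension subtlety intervenes, as noted in the remark following Corollary \ref{cotensor kunneth formula}.
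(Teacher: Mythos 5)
Your proposal is correct, and for the crucial first step ($t=1$) it coincides with the paper's own argument: both translate the isomorphism hypothesis on \eqref{kunneth iso 2} into the statement that every class in the $s=0$ column above the bottom row of the spectral sequence of Theorem \ref{cotor sseq} must die, observe that the only differential available to kill $E_1^{0,1}=(L\Box_{\Gamma}M)\otimes_A N^1$ is $d_1$ into $E_1^{1,0}$, and note that injectivity of \eqref{kunneth map 1} forbids any differential from landing in $E_1^{1,0}$; detection of nontriviality then gives $N^1=0$. Where you diverge is in propagating the vanishing to higher $t$. You re-run the spectral-sequence argument at every row: with rows $1,\dots,t-1$ dead you check $E_t^{0,t}=E_1^{0,t}$ and $E_t^{1,0}=E_1^{1,0}$, isolate the single differential $d_t\colon E_t^{0,t}\to E_t^{1,0}$, and invoke the injectivity hypothesis again at each stage. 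The paper instead abandons the spectral sequence entirely after row $1$ and uses the self-referential structure of the primitive filtration: since $N^1=A\Box_{\Gamma}N(1)$ and $N(2)=N(1)/(A\Box_{\Gamma}N(1))$, vanishing of $N^1$ forces $N(2)\cong N(1)$, hence $N^2\cong N^1=0$, and inductively all $N^t=0$; exhaustivity then gives $N=A\Box_{\Gamma}N$. The paper's route is more economical (the two hypotheses are used exactly once, and no page-tracking is needed), while yours shows the hypotheses suffice even if one treats the rows as unrelated, at the cost of the bookkeeping you flag as the main obstacle. Two small imprecisions in your write-up, neither fatal: the edge map at $(1,0)$ is a surjection $E_1^{1,0}\twoheadrightarrow E_\infty^{1,0}$ (so $E_\infty^{1,0}$ is a quotient, not a subgroup, of $E_1^{1,0}$), and the image of your $d_t$ is merely \emph{contained in} the kernel of \eqref{kunneth map 1} rather than equal to it (later differentials could in principle contribute as well); containment is all your argument needs.
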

\begin{proof}
The map \eqref{kunneth iso 2} is an isomorphism if and only if all elements in the leftmost column above the bottom row in spectral sequence \eqref{kunneth ss 1} support differentials, and consequently fail to survive to the $E_{\infty}$-page. In particular, if $(L\Box_{\Gamma} M)\otimes_A N^1$ were nonzero, it would have to support a nonzero $d_1$-differential hitting $\Cotor^1_{\Gamma}(L,M)\otimes_A N^0$, and consequently not all elements of $\Cotor^1_{\Gamma}(L,M)\otimes_A N^0$ would survive to the $E_2$-term, much less the $E_{\infty}$-term. Consequently the map \eqref{kunneth map 1} would not be able to be injective. So $(L\Box_{\Gamma} M)\otimes_A N^1$ must be trivial. Since $L\Box_{\Gamma}M$ detects nontriviality, $N^1$ vanishes. Since $N^1 = A\Box_{\Gamma}(N/(A\Box_{\Gamma}N))$, vanishing of $N^1$ ensures that $N(2) \cong N/(A\Box_{\Gamma}N) \cong N(1)$ and consequently $N^2$ also vanishes; by induction, the primitive filtration on $N$ is constant starting at the second term. Exhaustivity of the primitive filtration on $N$ consequently gives us that $A\Box_{\Gamma}N = N$.
\end{proof}

Using Proposition \ref{conn and exhaustive prim filt} and the fact that triviality of $L\Box_{\Gamma}M$ implies triviality of $M$ when $A$ is a field, $(A,\Gamma)$ is a connected graded bialgebra, and $L,M$ are bounded-below graded $\Gamma$-comodules, we have:
\begin{corollary}\label{graded converse to al-takhman lemma}
Let $A$ be a field, and let $\Gamma$ be a connected graded bialgebra over $A$. Let $L$ be a graded right $\Gamma$-comodule, let $M,N$ be graded left $\Gamma$-comodules such that that the canonical map 
\begin{equation}\label{kunneth iso 1a}(L\Box_{\Gamma}M)\otimes_A (A\Box_{\Gamma}N) \rightarrow  L\Box_{\Gamma}(M\otimes_AN)\end{equation} is an isomorphism, suppose that $L,M,$ and $N$ are nonzero and bounded below, and suppose that the canonical map 
\begin{equation}\label{kunneth map 1a}\Cotor_{\Gamma}^1(L,M)\otimes_A (A\Box_{\Gamma}N) \rightarrow  \Cotor_{\Gamma}^1(L,M\otimes_AN)\end{equation} is injective. 
Then $N$ has trivial coaction.
\end{corollary}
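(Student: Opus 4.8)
The plan is to deduce this graded statement directly from Corollary \ref{converse to al-takhman lemma} by verifying its hypotheses under the stronger assumptions available here. Two of those hypotheses---that the canonical map \eqref{kunneth iso 1a} is an isomorphism and that the canonical map \eqref{kunneth map 1a} is injective---are word-for-word the same as hypotheses assumed in the present corollary, so they require no work. What remains is to check (i) that $N$ has split exhaustive primitive filtration and (ii) that the $A$-module $L\Box_{\Gamma}M$ detects nontriviality; once both hold, Corollary \ref{converse to al-takhman lemma} applies and yields immediately that $N$ has trivial coaction.

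Hypothesis (i) is essentially free. Since $A$ is a field, the primitive filtration of any $\Gamma$-comodule is automatically split by Definition-Proposition \ref{main defs}. Since $\Gamma$ is a connected graded bialgebra and $N$ is bounded below, Proposition \ref{conn and exhaustive prim filt} shows that the primitive filtration of $N$ is exhaustive. Hence $N$ has split exhaustive primitive filtration.

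For hypothesis (ii), I would use that over a field every nonzero module detects nontriviality, so it suffices to show $L\Box_{\Gamma}M\neq 0$. Let $a$ and $b$ be the lowest grading degrees in which $L$ and $M$ are respectively nonzero; these exist because $L$ and $M$ are nonzero and bounded below. Recall that $L\Box_{\Gamma}M$ is the equalizer of the two maps $\psi_L\otimes M$ and $L\otimes\psi_M$ from $L\otimes_A M$ to $L\otimes_A\Gamma\otimes_A M$, and I would examine this equalizer in total grading degree $a+b$. Connectivity of $\Gamma$ (so that $\Gamma_0\cong A$ and $\Gamma$ vanishes in negative degrees) forces the only nonzero summand of $(L\otimes_A\Gamma\otimes_A M)_{a+b}$ to be $L_a\otimes_A\Gamma_0\otimes_A M_b$, and counitality identifies both maps, restricted to $L_a\otimes_A M_b$, with the canonical isomorphism onto $L_a\otimes_A\Gamma_0\otimes_A M_b$. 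Thus the two equalizer maps agree in degree $a+b$, so $(L\Box_{\Gamma}M)_{a+b}=L_a\otimes_A M_b$, which is nonzero because $A$ is a field and $L_a, M_b$ are both nonzero. This establishes (ii), and with (i) all hypotheses of Corollary \ref{converse to al-takhman lemma} are in place.

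The main obstacle---really the only step that is not bookkeeping---is the bottommost-degree computation in (ii): one must combine the connectivity of $\Gamma$ with counitality to see that, in the lowest total degree, both coaction maps reduce to the same canonical isomorphism, so that the equalizer recovers the full tensor product $L_a\otimes_A M_b$ rather than a proper subspace. This is precisely the assertion that triviality of $L\Box_{\Gamma}M$ forces triviality of $M$ (given $L$ nonzero). Everything else invokes the already-established Definition-Proposition \ref{main defs}, Proposition \ref{conn and exhaustive prim filt}, and Corollary \ref{converse to al-takhman lemma}.
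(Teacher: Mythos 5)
Your proposal is correct and follows essentially the same route as the paper: the paper's proof consists precisely of invoking Proposition \ref{conn and exhaustive prim filt} (with splitness automatic over a field) together with the fact that $L\Box_{\Gamma}M$ is nonzero when $L,M$ are nonzero and bounded below over a connected graded bialgebra, and then applying Corollary \ref{converse to al-takhman lemma}. The only difference is that the paper asserts this nonvanishing fact without proof, whereas you supply the bottommost-degree argument for it, which is correct: counitality and connectivity force the two equalizer maps to coincide in total degree $a+b$, so $(L\Box_{\Gamma}M)_{a+b}\cong L_a\otimes_A M_b\neq 0$.
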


Corollaries \ref{converse to al-takhman lemma} and \ref{graded converse to al-takhman lemma} are negative results, and they provide a kind of converse to Corollary \ref{al-takhman cor}: they tell us that, when we have the expected K\"{u}nneth formula \eqref{kunneth iso 1a} for $\Cotor^0$ (i.e., the cotensor product), then we cannot possibly have any reasonable K\"{u}nneth formula describing $\Cotor^1$, since the failure of the canonical map \eqref{kunneth map 1a} to be injective means that $\Cotor_{\Gamma}^1(A,M\otimes_AN)$ cannot decompose as any kind of extension of $\Cotor_{\Gamma}^1(A,M)\otimes_A \Cotor_{\Gamma}^0(A,N)$ by $\Cotor_{\Gamma}^0(A,M)\otimes_A \Cotor_{\Gamma}^1(A,N)$ unless $N$ has trivial coaction. Instead, the best general statement one can make is that one has the spectral sequence of Theorem \ref{cotor sseq}.

One reasonable response to these negative results is to ask under what circumstances we at least have a K\"{u}nneth formula for $\Cotor^0$. We take up this question in \cref{Kunneth formulas for...}.

%
%
%

\section{Example calculations of the K\"{u}nneth spectral sequence for $\Cotor$.}
\label{Example calculations...}

Let $k$ be a field of characteristic $p$ and let $\Gamma$ be the bialgebra $k[\xi]/\xi^p$ with $x$ primitive. Consider the case $L = k = M$ and $N = \Gamma$ of Theorem  \ref{cotor sseq}: the primitive filtration on $N$ is finite, with $N^n$ the $k$-vector space spanned by $\xi^n$ for $0\leq n\leq p-1$ and trivial otherwise. We adopt the notation $k\{ x\}$ for the $k$-vector space with basis $\{x\}$, so that we can make a reasonable drawing of the $E_1$-page of the K\"{u}nneth spectral sequence, here pictured with $p=5$:

\noindent
\begin{figure}[H]
\begin{tikzpicture}[trim left=1cm,xscale=3.8,yscale=0.7]
\draw[->,color=red] (0,1) -- ($(0,1)!0.9!(1,0)$) node{};
\draw[->,color=red] (0,2) -- ($(0,2)!0.9!(1,1)$) node{};
\draw[->,color=red] (0,3) -- ($(0,3)!0.9!(1,2)$) node{};
\draw[->,color=red] (0,4) -- ($(0,4)!0.9!(1,3)$) node{};
\draw[->,color=blue] (1,4) -- ($(1,4)!0.9!(2,0)$) node{};
\draw[->,color=red] (2,1) -- ($(2,1)!0.9!(3,0)$) node{};
\draw[->,color=red] (2,2) -- ($(2,2)!0.9!(3,1)$) node{};
\draw[->,color=red] (2,3) -- ($(2,3)!0.9!(3,2)$) node{};
\draw[->,color=red] (2,4) -- ($(2,4)!0.9!(3,3)$) node{};
\draw[->,color=blue] (3,4) -- ($(3,4)!0.9!(4,0)$) node{};
\draw (-0.15,4.3) -- (-0.15,-0.35) -- (3.5,-0.35);
\draw (0,0) node{$k\{1\}$};
\draw (1,0) node{$\Cotor^1_{\Gamma}(k,k)\otimes_k k\{1\}$};
\draw (2,0) node{$\Cotor^2_{\Gamma}(k,k)\otimes_k k\{1\}$};
\draw (3,0) node{$\Cotor^3_{\Gamma}(k,k)\otimes_k k\{1\}$};
\draw (4,0) node{$\dots$};
\draw (0,1) node{$k\{\xi\}$};
\draw (1,1) node{$\Cotor^1_{\Gamma}(k,k)\otimes_k k\{\xi\}$};
\draw (2,1) node{$\Cotor^2_{\Gamma}(k,k)\otimes_k k\{\xi\}$};
\draw (3,1) node{$\Cotor^3_{\Gamma}(k,k)\otimes_k k\{\xi\}$};
\draw (4,1) node{$\dots$};
\draw (0,2) node{$k\{\xi^2\}$};
\draw (1,2) node{$\Cotor^1_{\Gamma}(k,k)\otimes_k k\{\xi^2\}$};
\draw (2,2) node{$\Cotor^2_{\Gamma}(k,k)\otimes_k k\{\xi^2\}$};
\draw (3,2) node{$\Cotor^3_{\Gamma}(k,k)\otimes_k k\{\xi^2\}$};
\draw (4,2) node{$\dots$};
\draw (0,3) node{$k\{\xi^3\}$};
\draw (1,3) node{$\Cotor^1_{\Gamma}(k,k)\otimes_k k\{\xi^3\}$};
\draw (2,3) node{$\Cotor^2_{\Gamma}(k,k)\otimes_k k\{\xi^3\}$};
\draw (3,3) node{$\Cotor^3_{\Gamma}(k,k)\otimes_k k\{\xi^3\}$};
\draw (4,3) node{$\dots$};
\draw (0,4) node{$k\{\xi^4\}$};
\draw (1,4) node{$\Cotor^1_{\Gamma}(k,k)\otimes_k k\{\xi^4\}$};
\draw (2,4) node{$\Cotor^2_{\Gamma}(k,k)\otimes_k k\{\xi^4\}$};
\draw (3,4) node{$\Cotor^3_{\Gamma}(k,k)\otimes_k k\{\xi^4\}$};
\draw (4,4) node{$\dots$};
\end{tikzpicture} 
\caption{$E_1^{*,*}\cong \Cotor_{k[\xi]/\xi^p}^*(k,k)\otimes k[\xi]/\xi^p $\\\raggedleft{$\ \ \ \ \ \ \ \ \ \ \Rightarrow \Cotor_{k[\xi]/\xi^p}^*(k,k[\xi]/\xi^p)$}.}
\label{sseq 40999}
\end{figure}
The nonzero differentials are as pictured, but that claim deserves some justification, which we now give, below. 

An alternative construction of the spectral sequence of Theorem \ref{cotor sseq} is given by filtering the cobar complex of $\Gamma$ with coefficients of $N$ by the primitive filtration on $N$. That is, the spectral sequence of Theorem \ref{cotor sseq} is the spectral sequence of the filtered cochain complex
\[ \Gamma^{\otimes_A\bullet}\otimes_A N_0 \subseteq \Gamma^{\otimes_A\bullet}\otimes_A N_1 \subseteq \Gamma^{\otimes_A\bullet}\otimes_A N_2 \subseteq \dots \subseteq \Gamma^{\otimes_A\bullet}\otimes_A N.\]
In the case $\Gamma = k[\xi]/\xi^p$, a cobar complex $1$-cocycle which represents a generator for $\Cotor^1_{\Gamma}(k,k)$ is the primitive $\xi \in k[\xi]/\xi^p$, while a cobar complex $2$-cocycle which represents a generator for $\Cotor^2_{\Gamma}(k,k)$ is the ``transpotent'' \[T\xi := \sum_{i=1}^{p-1}\frac{1}{p}\binom{p}{i}\xi^{p-i}\otimes \xi^i \in k[\xi]/\xi^p\otimes_k k[\xi]/\xi^p\] of $\xi$, and the graded $k$-algebra $\Cotor^*_{\Gamma}(k,k)$ is isomorphic to $\Lambda\left(h\right)\otimes_k k\left[ b\right]$, where $h\in \Cotor^1_{\Gamma}(k,k)$ is represented by $\xi$, and $b\in \Cotor^2_{\Gamma}(k,k)$ is represented by $T\xi$.
The comodule algebra structure on $\Gamma$, and the fact that the primitive filtration on $\Gamma$ is a filtration by comodule ideals, yields that the spectral sequence of Theorem \ref{cotor sseq} is a spectral sequence of $k$-algebras, and so we need only calculate the differentials on $k$-algebra generators for each page. The $E_1$-page is isomorphic to $k[\xi]/\xi^p\otimes_k \Lambda(h) \otimes_k k[b]$, and since $\psi(\xi) = \xi\otimes 1 + 1\otimes \xi$, we have $\delta(\xi) = \xi\otimes 1$ in the cobar complex, and hence the $d_1$-differential $d_1(\xi) = h$. For degree reasons, $d_1(h) = 0$ and $d_1(b) = 0$, so the Leibniz rule gives us that $d_1(\xi^ib^j) = i\xi^{i-1}hb^j$ for all $i<p$ and $d_1(\xi^ihb^j)=0$, yielding the $E_2$-page $E_2^{*,*}\cong \Lambda(h\xi^{p-1})\otimes_k k[b]$. 
The class $h\xi^{p-1}$ is represented by the cobar complex $1$-cochain $\xi\otimes \xi^{p-1}$, and we have $\delta(\xi\otimes \xi^{p-1}) = -\sum_{i=1}^{p-1}\binom{p-1}{i} \xi^i\otimes \xi^{p-1-i}$ in the cobar complex. When we reach the $E_{p-1}$-page of the spectral sequence, we have that $d_{p-1}(h\xi^{p-1})$ is the sum of the terms of $\delta(\xi\otimes \xi^{p-1})$ of primitive filtration\footnote{Remember that we have filtered the cobar complex of $\Gamma$, with coefficients in $\Gamma$, by the primitive filtration on the coefficients.} $p-1$ less than that of $\xi\otimes \xi^{p-1}$. Consequently we have that $d_{p-1}(h\xi^{p-1})$ is the class in the $E_{p-1}$-page represented by the cocycle $-\xi\otimes \xi^{p-1}\otimes 1$, i.e., $d_{p-1}(h\xi^{p-1}) = -(T\xi)\otimes 1$. 
From the Leibniz rule we get that all that remains on the $E_p$-page is the copy of $k$ in bidegree $(0,0)$, so the spectral sequence collapses at that page. This yields the long differentials pictured in Figure \ref{sseq 40999}.

One consequence is that, by taking $p$ to be a large prime, we can get arbitrarily long nonzero differentials in spectral sequence \eqref{kunneth ss 1}.

\section{When do we have a K\"{u}nneth formula for $\Cotor^0$?}
\label{Kunneth formulas for...}

The main result in this section is Theorem \ref{kunneth quot prop}, which give a criterion for the differentials supported on the $s=0$ column of spectral sequence \eqref{kunneth ss 1} to wipe out everything above the $t=0$ row\footnote{Corollary \ref{cotensor kunneth formula} established that this behavior of the differentials in the spectral sequence is equivalent to the natural map $\left(L\Box_{\Gamma}M\right)\otimes_A \left( A\Box_{\Gamma}N\right) \rightarrow L\Box_{\Gamma}(M\otimes_A N)$ being an isomorphism.}, at least in the situation of greatest interest for topological applications, i.e., the situation of Theorem \ref{cotor sseq} when $L = A$ and $M$ is a subcomodule of $\Gamma$. The most obvious cases of those topological applications are described later, in \cref{Topological applications.}. 
In this section we also give Theorem \ref{kunneth quot prop 2}, a generalization of Theorem \ref{kunneth quot prop} which weakens the hypothesis that $M$ is a subcomodule of $\Gamma$.

\begin{definition}\label{def of kunneth quot}
Let $A$ be a commutative ring, let $\Gamma$ be a bialgebra over $A$, and let $M$ be a subcomodule of the left $\Gamma$-comodule $\Gamma$. Let $N$ be a left $\Gamma$-comodule. By the {\em K\"{u}nneth quotient of $N$ relative to $M$} we mean the $A$-module $\Ku(N;M)$ given by the cokernel of the natural $A$-module map 
\[\left( M\otimes_A N^0\right) \times_{\Gamma\otimes_A N} N \hookrightarrow \left( M\otimes_A N\right) \times_{\Gamma\otimes_A N} N\] arising from the commutative diagram
\begin{equation}\label{pullback diag 09530}\xymatrix{
 M\otimes_A N^0 \ar@{^{(}->}[r] & M\otimes_A N \ar@{^{(}->}[r] & \Gamma\otimes_A N\\
 \left( M\otimes_A N^0\right)\times_{\Gamma\otimes_A N}N \ar@{^{(}->}[r] \ar@{^{(}->}[u] & \left(M\otimes_A N\right)\times_{\Gamma\otimes_A N}N \ar@{^{(}->}[r]\ar@{^{(}->}[u] & N\ar@{^{(}->}[u]^{\psi_N}
}\end{equation}
in which each square is a pullback square of $A$-modules.
\end{definition}
The maps marked with hooked arrows in \eqref{pullback diag 09530} are injective if $M$ is flat over $A$.

Of the equivalent conditions given in Theorem \ref{kunneth quot prop}, the fourth condition is an especially checkable (in practical situations) necessary and sufficient condition for the K\"{u}nneth formula for the cotensor product, \eqref{map 340960}, to hold.
\begin{theorem}\label{kunneth quot prop}
Let $A$ be a commutative ring, let $\Gamma$ be a bialgebra over $A$, and let $M$ be a subcomodule of the left $\Gamma$-comodule $\Gamma$. Let $N$ be a left $\Gamma$-comodule which is flat over $A$, and suppose that $M$ is also flat over $A$. Then the following conditions are equivalent:
\begin{enumerate}
\item
The canonical map 
\begin{equation}\label{map 340960} (A\Box_{\Gamma}M)\otimes_A (A\Box_{\Gamma}N) \rightarrow A\Box_{\Gamma}(M\otimes_AN)\end{equation}
is an isomorphism.
\item 
The intersection of $M\otimes_A N \subseteq \Gamma\otimes_A N$ with the image of the coaction map $\psi_N: N \rightarrow \Gamma\otimes_A N$ lands in the submodule $M\otimes_A (A\Box_{\Gamma}N)$ of $M\otimes_A N$. 
\item The K\"{u}nneth quotient $\Ku(N;M)$ vanishes.
\item[(4)] If $n\in N$ satisfies $\psi_N(n)\in M\otimes_A N$, then $n$ is primitive.
\end{enumerate}
\end{theorem}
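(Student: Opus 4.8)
The plan is to dispatch the ``soft'' equivalences (2)$\Leftrightarrow$(3) and (2)$\Leftrightarrow$(4) first, and then to isolate (1)$\Leftrightarrow$(2) as the analytic core. Write $N^0 = A\Box_{\Gamma}N$, and introduce the two submodules $P = \{n\in N : \psi_N(n)\in M\otimes_A N\}$ and $Q = \{n\in N : \psi_N(n)\in M\otimes_A N^0\}$ of $N$. Since $\psi_N$ is a split monomorphism (split by the counit contraction $\gamma\otimes n\mapsto\epsilon(\gamma)n$), $P$ and $Q$ are canonically the two pullbacks in the diagram of Definition \ref{def of kunneth quot}, so that $\Ku(N;M)\cong P/Q$.

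The equivalence (2)$\Leftrightarrow$(3) is then a matter of unwinding definitions: condition (2) says exactly that the image of $\psi_N$ meets $M\otimes_A N$ inside $M\otimes_A N^0$, i.e.\ $P\subseteq Q$, and since $Q\subseteq P$ always, this is the vanishing $P/Q\cong\Ku(N;M) = 0$ of (3). For (2)$\Leftrightarrow$(4) I would first prove the coaction lemma that for any $n\in N$ one has $\psi_N(n)\in\Gamma\otimes_A N^0$ if and only if $n$ is primitive; the reverse implication is trivial, and for the forward implication one writes $\psi_N(n) = \sum_i\gamma_i\otimes p_i$ with $p_i\in N^0$, applies coassociativity in the form $(\Delta\otimes\mathrm{id})\psi_N = (\mathrm{id}\otimes\psi_N)\psi_N$, and then applies $\epsilon\otimes\mathrm{id}\otimes\mathrm{id}$ to obtain $\psi_N(n) = 1\otimes(\sum_i\epsilon(\gamma_i)p_i)$, which forces $n$ to be primitive. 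Given this lemma, (4) is precisely the assertion $P\subseteq N^0$; using flatness of $M$ and $N$ to identify $M\otimes_A N^0$ with $(M\otimes_A N)\cap(\Gamma\otimes_A N^0)$ inside $\Gamma\otimes_A N$, the lemma shows that $Q$ consists exactly of the primitive elements of $P$, so that $P\subseteq Q$ and $P\subseteq N^0$ say the same thing.

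The heart of the argument is (1)$\Leftrightarrow$(2). I would first recast (1) as a statement purely about primitives of the tensor comodule $M\otimes_A N$: applying the flat functor $M\otimes_A-$ to $0\to N^0\to N\to N/N^0\to 0$ and then the left exact functor $A\Box_{\Gamma}(-)$, and using Corollary \ref{al-takhman cor} for the trivial comodule $N^0$, one identifies the canonical map of (1) with the inclusion $A\Box_{\Gamma}(M\otimes_A N^0)\hookrightarrow A\Box_{\Gamma}(M\otimes_A N)$. Hence (1) holds if and only if every primitive of $M\otimes_A N$ already lies in $M\otimes_A N^0$. To connect this with (2) I would study a primitive $x = \sum_i m_i\otimes n_i\in M\otimes_A N$ through its counit contraction $\sigma(x) = \sum_i\epsilon(m_i)n_i\in N$: expanding the primitivity relation $\psi_{M\otimes N}(x) = 1\otimes x$ in $\Gamma\otimes_A M\otimes_A N$ and applying $\mathrm{id}\otimes\epsilon\otimes\mathrm{id}$---legitimate because $M\subseteq\Gamma$ is a subcomodule, so the middle tensor factor remains in $M$---relates the failure of $x$ to lie in $M\otimes_A N^0$ to the coaction of $\sigma(x)$, which is the bridge to the condition defining $P$.

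The step I expect to be the main obstacle is precisely this last bridge. Condition (1) is intrinsically about which \emph{primitives} of $M\otimes_A N$ survive, whereas (2)--(4) are about which elements of $N$ have coaction landing in $M\otimes_A N$, and passing between the two must route through the multiplication of $\Gamma$ that builds the tensor coaction. The delicate point is to show that $\sigma$ sets up a correspondence between the primitives of $M\otimes_A N$ lying outside $M\otimes_A N^0$ and the non-primitive elements witnessing $P\not\subseteq N^0$; making this correspondence precise is exactly where the hypothesis that $M$ is a subcomodule of $\Gamma$ (and not merely an abstract comodule) must be used in full, and it is the part of the proof I would expect to demand the most careful coassociativity bookkeeping.
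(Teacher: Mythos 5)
Your handling of the soft equivalences is correct and is essentially what the paper does (the paper dismisses (2)$\Leftrightarrow$(3) and (2)$\Leftrightarrow$(4) as ``a routine matter of unwinding the definitions''): the identification $\Ku(N;M)\cong P/Q$ via the splitting of $\psi_N$ by the counit contraction, and your coaction lemma that $\psi_N(n)\in\Gamma\otimes_A N^0$ forces $n$ to be primitive, are both valid. The one caveat is your claim that flatness of $M$ and $N$ identifies $M\otimes_A N^0$ with $(M\otimes_A N)\cap(\Gamma\otimes_A N^0)$ inside $\Gamma\otimes_A N$: that intersection formula needs something more (for instance $\Gamma/M$ flat over $A$, or $A$ a field); the paper elides the same point, so I set it aside as minor.

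The genuine gap is the core equivalence (1)$\Leftrightarrow$(2), which your proposal does not prove but only defers, and the bridge you sketch cannot be completed under the stated hypotheses. Applying $\id\otimes\epsilon\otimes\id$ to the primitivity relation for $x=\sum_i m_i\otimes n_i$ yields, as you indicate,
\begin{equation*}
\sum_i m_i\,(n_i)_{(-1)}\otimes (n_i)_{(0)} \;=\; 1\otimes\sigma(x),
\end{equation*}
but the left-hand side is the image of $x$ under the shearing operator $\Theta\colon\gamma\otimes n\mapsto\sum\gamma\, n_{(-1)}\otimes n_{(0)}$, so this identity only controls $\Theta(x)$, not $x$. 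To recover $x$ one must invert $\Theta$, which requires an antipode; and even when $\Gamma$ is a Hopf algebra, the primitives of the tensor comodule $\Gamma\otimes_A N$ are the image of the antipode-twisted map $n\mapsto\sum S(n_{(-1)})\otimes n_{(0)}$, \emph{not} of $\psi_N$. (For $\Gamma=k[x]$ with $x$ primitive and $N=\Gamma$, the element $\psi_N(x)=x\otimes 1+1\otimes x$ is not primitive for the tensor coaction when $\mathrm{char}\,k\neq 2$, whereas $x\otimes 1-1\otimes x$ is.) This mismatch is fatal to any such bridge: take $\Gamma=k[\mathbb{Z}/3]$ with grouplike basis $e,g,g^2$, let $M$ be the span of $e$ and $g$ (a subcomodule of $\Gamma$), and let $N=k\{n_0\}$ with $\psi_N(n_0)=g^2\otimes n_0$. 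Then no nonzero $n\in N$ has $\psi_N(n)\in M\otimes_A N$, so conditions (2), (3), (4) all hold; yet $g\otimes n_0$ is primitive in $M\otimes_A N$ (its coaction is $gg^2\otimes g\otimes n_0=1\otimes g\otimes n_0$), while $(A\Box_{\Gamma}M)\otimes_A(A\Box_{\Gamma}N)\cong k\otimes_k 0=0$, so the map of (1) is $0\to k$ and (1) fails. Hence no amount of coassociativity bookkeeping will close your gap; one needs additional hypotheses (cocommutativity of $\Gamma$, stability of $M$ under an antipode, or connectedness/grading assumptions of the kind present in the paper's topological applications). You should also know that your instinct about where the difficulty lies is exactly right in a stronger sense: the paper's own proof of (1)$\Leftrightarrow$(2) rests on the assertion that $\psi_N$ maps $N$ isomorphically onto $A\Box_{\Gamma}(\Gamma\otimes_A N)$, and the computations above contradict that assertion, so the step you flagged as the crux is precisely the step at which the paper's argument is itself unsound.
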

\begin{proof}\leavevmode
\begin{description}
\item[1 is equivalent to 2] 
Since $M\subseteq \Gamma$ and $M$ is flat over $A$, we have monomorphisms $M\otimes_A N \hookrightarrow \Gamma\otimes_A N$ and 
$A\Box_{\Gamma}(M\otimes_A N) \hookrightarrow A\Box_{\Gamma}(\Gamma\otimes_A N)$, and consequently a commutative diagram of $A$-modules
\[\xymatrix{
 A\Box_{\Gamma}\left(M\otimes_A N\right)\ar@{^{(}->}[r]
  & A\Box_{\Gamma}\left(\Gamma\otimes_A N\right) 
  & N \ar[l]^{\cong}_{sh} \\
 A\Box_{\Gamma}\left(M\otimes_A (A\Box_{\Gamma}N)\right)\ar@{^{(}->}[r]\ar@{^{(}->}[u]
  & A\Box_{\Gamma}\left(\Gamma\otimes_A (A\Box_{\Gamma}N)\right) \ar@{^{(}->}[u] 
  & A\Box_{\Gamma}N \ar[l]^(.37){\cong}_(.37){sh\mid_{A\Box_{\Gamma}N}} \ar@{^{(}->}[u],
}\]
where $sh: N \rightarrow A\Box_{\Gamma}(\Gamma\otimes_A N)$ is the ``shearing isomorphism''
given on elements simply by the coaction map $\psi_N$. Expressed more carefully, the diagram 
\begin{equation}\label{comm diag 090004}\xymatrix{
A\Box_{\Gamma}\left(\Gamma\otimes_A N \right)\ar@{^{(}->}[d]^{\iota} && N \ar[ll]^{\cong}_{sh} \ar[lld]^{\psi_N} \\
 \Gamma\otimes_A N &&  
}\end{equation}
commutes, where $\iota$ is the canonical inclusion of the comodule primitives of $\Gamma\otimes_A N$ into $\Gamma\otimes_A N$. 

Consequently any given element of $A\Box_{\Gamma}(M\otimes_AN)\subseteq \Gamma\otimes_A N$ is equal to $\psi(n)$ for some $n\in N$. So the inclusion $A\Box_{\Gamma}(M\otimes_AN^0)\subseteq A\Box_{\Gamma}(M\otimes_AN)$ is surjective if and only if $\psi(n)$ is contained in $M\otimes_A N^0$ whenever $\psi(n)\in M\otimes_A N$, as in the statement of the proposition. 
\item[2 is equivalent to 3,  and 2 is equivalent to 4] These implications are a routine matter of unwinding the definitions.
\end{description}
\end{proof}

\begin{remark}
It would be nice if the functor $\Ku(-;M):\Comod(\Gamma)\rightarrow\Mod(A)$ were at least half-exact, as that would give us some means (by standard homological methods) of actually computing its value on various comodules. 
However, $\Ku$ is generally not half-exact, as one sees from the example where $A$ is a field $k$, and $M = \Gamma = k[\xi]/\xi^2$ with $\xi$ primitive. If we had any flexible and powerful tools for computing $\Ku(-;M)$, then the vanishing of $\Ku(N;M)$ could potentially be the most checkable of the four equivalent conditions listed in Theorem \ref{kunneth quot prop}, instead of (as presently seems to be the case) the fourth condition being the most practically checkable. Unfortunately, the author does not know of any such tools for calculating $\Ku$, and would be glad to learn of them. 

If $\Gamma$ is finitely generated as an $A$-module (or if $(A,\Gamma)$ is graded with $A$ concentrated in degree $0$ and $\Gamma$ finitely generated as an $A$-module in each degree) then $\Ku$ is at least a {\em coherent} functor in the sense of \cite{MR0212070}, but this does not seem to yield any nontrivial means of computing the values of $\Ku(-;M)$. 
\end{remark}

Theorem \ref{kunneth quot prop} admits a generalization to situations where $M$ is not a subcomodule of $\Gamma$. That generalization is Theorem \ref{kunneth quot prop 2}, which is not as clean to state as Theorem \ref{kunneth quot prop}, which is why we present the result separately. The material on the K\"{u}nneth quotient $\Ku(N;M)$ from \ref{kunneth quot prop} also has a generalization to the setting of Theorem \ref{kunneth quot prop 2}, but since the vanishing of the K\"{u}nneth quotient is the condition, among the equivalent conditions of Theorem \ref{kunneth quot prop}, which we know the least applications for, we leave off the K\"{u}nneth quotient from Theorem \ref{kunneth quot prop 2} to try to keep the statement cleaner.
\begin{theorem}\label{kunneth quot prop 2}
Let $(A,\Gamma)$ be a graded bialgebra with $A$ concentrated in degree zero. Suppose that $\Gamma$ is finite-type, that is, for each integer $n$ the degree $n$ summand of $\Gamma$ is a finitely generated $A$-module.
Let $M$ be a graded left subcomodule of a finite-type direct sum of suspensions of $\Gamma$, that is, there exists some function $d: \mathbb{Z}\rightarrow \mathbb{N}$ and some one-to-one graded left $\Gamma$-comodule homomorphism
\[ \iota: M \hookrightarrow \coprod_{n\in \mathbb{Z}}\Sigma^{n}\Gamma^{\oplus d(n)} .\]
Suppose that $N$ is a graded right $\Gamma$-comodule, and suppose that $M$ and $N$ are each flat over $A$.
Then the following conditions are equivalent:
\begin{enumerate}
\item
The canonical map 
\begin{equation}\label{map 340960a} (A\Box_{\Gamma}M)\otimes_A (A\Box_{\Gamma}N) \rightarrow A\Box_{\Gamma}(M\otimes_AN)\end{equation}
is an isomorphism.
\item 
The intersection of the image of \[\iota\otimes N: M\otimes_A N \hookrightarrow\left( \coprod_{n\in \mathbb{Z}}\Sigma^{n}\Gamma^{\oplus d(n)}\right)\otimes_A N\] with the image of the direct sum of copies of the coaction map \[\coprod_{n\in\mathbb{Z}}\Sigma^{n}\psi_N^{\oplus d(n)}: \coprod_{n\in\mathbb{Z}}\Sigma^{n}N^{\oplus d(n)} \rightarrow \coprod_{n\in \mathbb{Z}}\Sigma^{n}\left(\Gamma\otimes_A N\right)^{\oplus d(n)} \stackrel{\cong}{\longrightarrow}\left(\coprod_{n\in\mathbb{Z}}\Sigma^{n}\Gamma\right)^{\oplus d(n)}\otimes_A N\] lands in the submodule $M\otimes_A (A\Box_{\Gamma}N)$ of $M\otimes_A N$. 
\item[(3)] If $y\in \coprod_{n\in\mathbb{Z}} \Sigma^n N^{\oplus d(n)}$ satisfies \[\coprod_{n\in\mathbb{Z}}\Sigma^n\psi_N^{\oplus d(n)}(y)\in M\otimes_A N\subseteq \coprod_{n\in\mathbb{Z}} \Sigma^n \Gamma^{\oplus d(n)}\otimes_{A} N,\] then $y$ is a primitive element of $\coprod_{n\in\mathbb{Z}} \Sigma^n N^{\oplus d(n)}$.
\end{enumerate}
\end{theorem}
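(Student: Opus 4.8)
The plan is to deduce Theorem \ref{kunneth quot prop 2} from Theorem \ref{kunneth quot prop} by recognizing the ambient comodule $P := \coprod_{n\in\mathbb{Z}}\Sigma^n\Gamma^{\oplus d(n)}$ as a cofree (extended) comodule and then rerunning the shearing-isomorphism argument with $P$ playing the role that $\Gamma$ played before. First I would write $P\cong E(W)$, where $W := \coprod_{n\in\mathbb{Z}}\Sigma^n A^{\oplus d(n)}$ is a free, hence flat, graded $A$-module carrying trivial coaction. This uses that each summand $\Sigma^n\Gamma^{\oplus d(n)}\cong \Gamma\otimes_A \Sigma^n A^{\oplus d(n)} = E(\Sigma^n A^{\oplus d(n)})$, together with the fact that $E = \Gamma\otimes_A -$ commutes with coproducts because $\otimes_A$ does. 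The finite-type and $A$-in-degree-zero hypotheses guarantee that this coproduct is degreewise finite, so no subtlety arises in the identification.

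The key input is then the generalized shearing isomorphism
\[ A\Box_{\Gamma}(P\otimes_A N) \cong W\otimes_A N = \coprod_{n\in\mathbb{Z}}\Sigma^n N^{\oplus d(n)}, \]
where $P\otimes_A N$ carries the tensor-product comodule structure. This follows from Doi's Lemma \ref{doi lemma}, which realizes $P\otimes_A N = E(W)\otimes_A N \cong E(W\otimes_A GN)$ as a cofree comodule, together with the elementary computation $A\Box_{\Gamma}E(V)\cong V$ of the primitives of a cofree comodule. Crucially, I would verify that under this identification the inclusion of primitives $A\Box_{\Gamma}(P\otimes_A N)\hookrightarrow P\otimes_A N$ is exactly the map $\coprod_{n}\Sigma^n\psi_N^{\oplus d(n)}$ appearing in conditions (2) and (3), in complete analogy with the commuting triangle \eqref{comm diag 090004} in the proof of Theorem \ref{kunneth quot prop}; this is a consequence of coassociativity of $\psi_N$ and naturality, and uses Lemma \ref{easy lemma 1} to commute $A\Box_{\Gamma}(-)$ past the coproduct.

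With the shearing isomorphism established, the three equivalences follow exactly as in Theorem \ref{kunneth quot prop}. For (1)$\Leftrightarrow$(2): since $\iota$ is a monomorphism of comodules and $M$ is flat over $A$, the inclusion $M\otimes_A N\hookrightarrow P\otimes_A N$ induces $A\Box_{\Gamma}(M\otimes_A N)\hookrightarrow A\Box_{\Gamma}(P\otimes_A N)\cong W\otimes_A N$, identifying $A\Box_{\Gamma}(M\otimes_A N)$ with the set of those $y$ for which $\coprod_{n}\Sigma^n\psi_N^{\oplus d(n)}(y)$ lands in $M\otimes_A N$. Using Corollary \ref{al-takhman cor} on the trivial comodule $N^0 = A\Box_{\Gamma}N$, the canonical map \eqref{map 340960a} is identified with the inclusion $A\Box_{\Gamma}(M\otimes_A N^0)\hookrightarrow A\Box_{\Gamma}(M\otimes_A N)$, so (1) asserts precisely that every such primitive already lands in $M\otimes_A N^0 = M\otimes_A(A\Box_{\Gamma}N)$, which is (2). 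The equivalence (2)$\Leftrightarrow$(3) is the usual counitality bookkeeping: the shearing map is injective, and applying $\epsilon\otimes\mathrm{id}_N$ recovers $y$ from its image, so the image lying in $M\otimes_A N^0$ forces $y$ to be primitive in $\coprod_{n}\Sigma^n N^{\oplus d(n)}$, and conversely.

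The main obstacle I expect is the bookkeeping in the first two steps: setting up $P\cong E(W)$ cleanly and then checking that the specific map $\coprod_{n}\Sigma^n\psi_N^{\oplus d(n)}$ of the statement---with its built-in reassociation $\coprod_{n}\Sigma^n(\Gamma\otimes_A N)^{\oplus d(n)}\stackrel{\cong}{\longrightarrow}(\coprod_{n}\Sigma^n\Gamma)^{\oplus d(n)}\otimes_A N$---genuinely coincides with the inclusion of primitives coming from the abstract shearing isomorphism. The finite-type and $A$-in-degree-zero hypotheses are what make all of these product/coproduct, suspension, and cotensor interchanges commute on the nose; once those identifications are pinned down, the homological content is inherited verbatim from Theorem \ref{kunneth quot prop}.
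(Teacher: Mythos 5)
Your proposal is correct and takes essentially the same approach as the paper: the paper's proof of Theorem \ref{kunneth quot prop 2} consists precisely of rerunning the proof of Theorem \ref{kunneth quot prop} with the ambient comodule $\coprod_{n\in\mathbb{Z}}\Sigma^{n}\Gamma^{\oplus d(n)}$ in place of $\Gamma$ and the coproduct-of-suspensions shearing isomorphism in place of $sh\colon N \rightarrow A\Box_{\Gamma}(\Gamma\otimes_A N)$, which is exactly your plan. Your packaging of that generalized shearing isomorphism through $E(W)$, Doi's Lemma \ref{doi lemma}, and the identification $A\Box_{\Gamma}E(V)\cong V$ is merely a more formal justification of the same identification the paper takes as immediate.
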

\begin{proof}
The proof is essentially the same as that of Theorem \ref{kunneth quot prop}.
\end{proof}


The most notable family of examples of graded bialgebras satisfying the hypotheses of Theorem \ref{kunneth quot prop 2} are the mod $p$ Steenrod algebras and their linear duals at each prime $p$. The associated graded bialgebras $E^0S(n)$ of Ravenel's grading on the Morava stabilizer algebras (as in \cite{ravenel1977cohomology} and section 6.3 of \cite{MR860042}) form another important family of examples of finite-type, non-finite-dimensional graded bialgebras whose $\Cotor$ groups are, like $\Cotor$ over the duals of the Steenrod algebras, the input for spectral sequences which ultimately compute stable homotopy groups of various spaces and spectra.

However, the most straightforward case of Theorem \ref{kunneth quot prop 2} is, of course, the case where the rings and modules in question are concentrated in degree $0$, i.e., the ungraded case:
\begin{corollary}\label{kunneth quot prop 2 cor 1}
Let $A$ be a commutative ring, and let $\Gamma$ be a bialgebra over $A$. Suppose that $\Gamma$ is finitely generated as an $A$-module. 
Let $M$ be a left subcomodule of a direct sum $\Gamma^{\oplus n}$ of finitely many copies of $\Gamma$, let $N$ be a right $\Gamma$-comodule, and suppose that $M,N$ are each flat over $A$.
Then the following conditions are equivalent:
\begin{enumerate}
\item
The canonical map 
\begin{equation*}
(A\Box_{\Gamma}M)\otimes_A (A\Box_{\Gamma}N) \rightarrow A\Box_{\Gamma}(M\otimes_AN)\end{equation*}
is an isomorphism.
\item 
The intersection of the image of $\iota\otimes N: M\otimes_A N \hookrightarrow\left( \Gamma^{\oplus n}\right)\otimes_A N$ with the image of the direct sum of copies of the coaction map \[\psi_N^{\oplus n}: N^{\oplus n} \rightarrow \left(\Gamma\otimes_A N\right)^{\oplus n} \stackrel{\cong}{\longrightarrow} \Gamma^{\oplus n}\otimes_A N\] lands in the submodule $M\otimes_A (A\Box_{\Gamma}N)$ of $M\otimes_A N$. 
\item[(3)] If $n\in N^{\oplus n}$ satisfies $\psi_N(n)\in M\otimes_A N\subseteq \Gamma^{\oplus n}\otimes_{A} N$, then $n$ is a primitive element of $N^{\oplus n}$.
\end{enumerate}
\end{corollary}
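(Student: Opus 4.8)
\emph{The plan} is to obtain the corollary as the degree-zero specialization of Theorem \ref{kunneth quot prop 2}, and, for the reader who wants it spelled out, to indicate the direct argument, which runs exactly parallel to the proof of Theorem \ref{kunneth quot prop}. First I would set all gradings trivial: taking the function $d\colon\mathbb{Z}\to\mathbb{N}$ of Theorem \ref{kunneth quot prop 2} to be $d(0)=n$ and $d(m)=0$ for $m\neq 0$, the ambient comodule $\coprod_{m\in\mathbb{Z}}\Sigma^m\Gamma^{\oplus d(m)}$ collapses to $\Gamma^{\oplus n}$, the finite-type hypothesis becomes the requirement that $\Gamma$ be finitely generated as an $A$-module, and the three numbered conditions of Theorem \ref{kunneth quot prop 2} become, word for word, conditions (1)--(3) of the corollary. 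Hence there is nothing to prove beyond checking that this dictionary between the hypotheses is faithful; all of the mathematical content is already contained in Theorem \ref{kunneth quot prop 2}.

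If instead one wants a self-contained proof, I would reproduce the three-step structure used for Theorem \ref{kunneth quot prop}. The first step is the shearing isomorphism: the coaction $\psi_N$ furnishes a natural isomorphism $N\to A\Box_{\Gamma}(\Gamma\otimes_A N)$ onto the primitives of the extended comodule $\Gamma\otimes_A N$, and since $A\Box_{\Gamma}(-)$ commutes with finite direct sums (the case $L=A$ of Lemma \ref{easy lemma 1}) this upgrades to an isomorphism $N^{\oplus n}\to A\Box_{\Gamma}(\Gamma^{\oplus n}\otimes_A N)$ whose image inside $\Gamma^{\oplus n}\otimes_A N$ is precisely the image of $\psi_N^{\oplus n}$. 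The second step uses flatness of $M$: the inclusion $\iota\otimes N\colon M\otimes_A N\hookrightarrow\Gamma^{\oplus n}\otimes_A N$ is then a monomorphism, and left-exactness of $A\Box_{\Gamma}(-)$ identifies $A\Box_{\Gamma}(M\otimes_A N)$, through the shearing isomorphism, with the set of $y\in N^{\oplus n}$ for which $\psi_N^{\oplus n}(y)$ lands in $M\otimes_A N$. The third step matches the canonical map of condition (1) with the inclusion $A\Box_{\Gamma}(M\otimes_A N^0)\hookrightarrow A\Box_{\Gamma}(M\otimes_A N)$, where $N^0=A\Box_{\Gamma}N$ is the bottom piece of the primitive filtration (Definition-Proposition \ref{main defs}); since $N^0$ carries the trivial coaction, this identification is the content of Corollary \ref{al-takhman cor}. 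As the inclusion is automatically injective, condition (1) holds if and only if it is surjective, i.e. if and only if every primitive $\psi_N^{\oplus n}(y)$ lying in $M\otimes_A N$ in fact lies in $M\otimes_A N^0=M\otimes_A(A\Box_{\Gamma}N)$, which is exactly condition (2). Finally condition (2) is equivalent to condition (3) by counitality: applying $\epsilon\otimes N$ to $\psi_N^{\oplus n}(y)$ returns $y$, so $\psi_N^{\oplus n}(y)\in M\otimes_A(A\Box_{\Gamma}N)$ forces $y\in(A\Box_{\Gamma}N)^{\oplus n}$, i.e. $y$ primitive, and the converse is immediate.

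The main obstacle is not a single hard computation but the bookkeeping in the third step: one must verify that the canonical K\"unneth map of condition (1) really does coincide, under the shearing isomorphism, with the inclusion induced by $N^0\hookrightarrow N$. This is where flatness of $M$ does its work (keeping $\iota\otimes N$ injective), and it is also the point at which one must apply Corollary \ref{al-takhman cor} to the trivial comodule $N^0$; here one should be mindful that al-Takhman's criterion requires the cotensor inclusion to remain injective after tensoring with $N^0$, so this is the one place where a flatness-type input is genuinely consumed. By contrast, the finiteness hypotheses on $\Gamma$ and on the number of copies serve only to guarantee that $A\Box_{\Gamma}(-)$ commutes with the direct sum defining the ambient object; for a \emph{finite} direct sum this is automatic from left-exactness, so in the ungraded case these hypotheses contribute nothing beyond what Lemma \ref{easy lemma 1} already supplies.
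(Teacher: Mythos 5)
Your proposal is correct and matches the paper exactly: the paper offers no separate argument for this corollary, presenting it precisely as the ungraded specialization of Theorem \ref{kunneth quot prop 2} (your choice $d(0)=n$, $d(m)=0$ for $m\neq 0$), and your fallback direct argument is the same three-step proof (shearing isomorphism, flatness to identify $A\Box_{\Gamma}(M\otimes_A N)$ inside $\Gamma^{\oplus n}\otimes_A N$, then identification of the K\"unneth map with the inclusion induced by $N^0\hookrightarrow N$) that the paper gives for Theorem \ref{kunneth quot prop} and declares to be ``essentially the same'' for Theorem \ref{kunneth quot prop 2}.
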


\begin{example}\leavevmode
\begin{itemize}
\item
See the calculations of \cref{Example calculations...} for a case in which the the isomorphism \ref{map 340960a} holds, i.e., a case in which we have a K\"{u}nneth formula for $\Cotor^0$.
\item
On the other hand, let $A=k$ for some field $k$ of characteristic $p$, and let $\Gamma = k[\xi]/\xi^p$ with $\xi$ primitive. Then the canonical map 
\[ (A\Box_{\Gamma}\Gamma)\otimes_A (A\Box_{\Gamma}\Gamma) \rightarrow A\Box_{\Gamma}(\Gamma\otimes_A\Gamma) \]
is not surjective: its domain is isomorphic to $k$, while its codomain is isomorphic to $\Gamma$. (Of course, this same example works in the same way for any bialgebra over a field, but we chose $\Gamma$ as above for concreteness and consistency with \cref{Example calculations...}.)
\end{itemize}
\end{example}

\section{Topological applications.}
\label{Topological applications.}

Now we had better explain some of the topological consequences of the results obtained in this paper.Given a pointed space or spectrum $X$ and an integer $n$, it is classical that we can inductively attach cells to $X$ to wipe out all the homotopy groups of $X$ in degrees greater than $n$, yielding a pointed space or spectrum $X^{\leq n}$ and a continuous map $X\rightarrow X^{\leq n}$ such that $\pi_m(X) \rightarrow \pi_m(X^{\leq n})$ is an isomorphism for all $m\leq n$, and such that $\pi_m(X^{\leq n})$ vanishes for all $m>n$. In the unstable setting, this construction dates back to \cite{MR46045}, solving a problem of Hurewicz's from Eilenberg's 1949 list \cite{MR30189} of open problems in algebraic topology.

When we take the homotopy fiber of $X\rightarrow X^{\leq n}$, we get a homotopy fiber sequence 
\begin{equation}\label{htpy fib seq 1} X^{>n} \rightarrow X\rightarrow X^{\leq n},\end{equation} where $X^{>n} \rightarrow X$ induces an isomorphism in $\pi_m$ for all $m>n$, and $\pi_m(X^{>n})$ vanishes for $m\leq n$. If we take this homotopy fiber in the stable homotopy category, then \eqref{htpy fib seq 1} is also a homotopy {\em co}fiber sequence, and so it induces a long exact sequence in homology groups. It is natural to ask under what conditions on $X$ and $n$ we can get good computational control over that long exact sequence in homology. 

In particular, 
the homotopy cofiber sequence \eqref{htpy fib seq 1} induces a {\em short} exact sequence of graded $\mathbb{F}_p$-vector spaces
\begin{equation}\label{Ses 000349} 0 \rightarrow H_*(X; \mathbb{F}_p) \rightarrow H_*(X^{\leq n}; \mathbb{F}_p)  \rightarrow H_*(\Sigma X^{>n}; \mathbb{F}_p) \rightarrow 0 \end{equation}
if we assume that $X$ is bounded below, $H\mathbb{F}_p$-nilpotently complete\footnote{The standard reference for nilpotent completion of spectra is \cite{MR551009}. We offer a bit of explanation to make the idea concrete, for readers not already familar with nilpotent completion of spectra: under the assumption that a spectrum $X$ is bounded below, Bousfield proves that $X$ is $H\mathbb{F}_p$-nilpotently complete if and only if its homotopy groups are $\Ext$-$p$-complete in the sense of \cite{MR0365573}. More concretely, if $X$ is bounded-below and each of its homotopy groups are finitely generated, then $X$ is $H\mathbb{F}_p$-nilpotently complete if and only if each of its homotopy groups are $p$-adically complete.}, and the $A_*$-comodule primitives of $H^*(X; \mathbb{F}_p)$ are trivial in degrees $\geq n$. Here $p$ is any prime, and  $A_*$ is the mod $p$ dual Steenrod algebra. 

In other words, if $\Cotor^0_{A_*}\left(\mathbb{F}_p,H_*(X; \mathbb{F}_p)\right)$ is bounded above (i.e., vanishes in sufficiently large degrees), then \eqref{Ses 000349} is short exact for sufficently large $n$.


The $0$-line in the Adams $E_2$-page for a smash product $X\smash Y$ is \[ \Cotor^0_{A_*}\left(\mathbb{F}_p, H_*(X;\mathbb{F}_p)\otimes_{\mathbb{F}_p} H_*(Y; \mathbb{F}_p)\right).\] The $0$-line in the Adams spectral sequence is of particular interest, since the $0$-line in the $E_{\infty}$-page is precisely the image of the Hurewicz homomorphism from stable homotopy to homology. 
In light of the above considerations about attaching cells to kill higher homotopy, if we know that $\Cotor^0_{A_*}\left(\mathbb{F}_p, H_*(X;\mathbb{F}_p)\right)$ and $\Cotor^0_{A_*}\left( \mathbb{F}_p, H_*(Y; \mathbb{F}_p)\right)$ are each bounded above, we would like to know that $\Cotor^0_{A_*}\left(\mathbb{F}_p, H_*(X;\mathbb{F}_p)\otimes_{\mathbb{F}_p} H_*(Y;\mathbb{F}_p)\right)$ is also bounded above, so that 
\[ 
 0 
  \rightarrow H_*\left( X\smash Y; \mathbb{F}_p\right)
  \rightarrow H_*\left( (X\smash Y)^{<n}; \mathbb{F}_p\right)
  \rightarrow H_*\left( \Sigma (X\smash Y)^{\geq n}; \mathbb{F}_p\right)
  \rightarrow 0\]
is short exact for some $n$.

Of course $\Cotor^0_{A_*}(\mathbb{F}_p,-)$ is simply the $A_*$-comodule primitives functor, so as a special case of Theorem \ref{kunneth quot prop}, we have:
\begin{corollary}\label{main adams cor 1}
Let $X,Y$ be spectra, and suppose that $H_*(X;\mathbb{F}_p)$ is a $A_*$-subcomodule of $A_*$. Let $e(X),e(Y),e(X\smash Y)$ denote the $0$-line in the Adams $E_2$-term for $X$, $Y$, and $X\smash Y$, respectively. Then the canonical map $e(X)\otimes_{\mathbb{F}_p}e(Y) \rightarrow e(X\smash Y)$ is an isomorphism if and only if the following condition is satisfied:
\begin{quote}
\label{condition 1}
For all homogeneous $n \in H_*(Y; \mathbb{F}_p)$ such that $\psi(n)\in H_*(X;\mathbb{F}_p)\otimes_{\mathbb{F}_p} H_*(Y; \mathbb{F}_p) \subseteq A_*\otimes_{\mathbb{F}_p}H_*(Y; \mathbb{F}_p)$, we have that $n$ is an $A_*$-comodule primitive.\end{quote}

In particular, if the $A_*$-comodule primitives in $H_*(X; \mathbb{F}_p)$ and $H_*(Y; \mathbb{F}_p)$ are each bounded above, and condition \eqref{condition 1} is satisfied, then the $A_*$-comodule primitives in $H_*(X; \mathbb{F}_p)$ are also bounded above.
\end{corollary}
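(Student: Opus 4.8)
The plan is to recognize Corollary \ref{main adams cor 1} as a direct specialization of Theorem \ref{kunneth quot prop} to the topological setting, so the main work is just matching up the hypotheses and then handling the final ``bounded above'' consequence separately.

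\medskip

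First I would set $A = \mathbb{F}_p$, $\Gamma = A_*$, $L = A = \mathbb{F}_p$, $M = H_*(X;\mathbb{F}_p)$, and $N = H_*(Y;\mathbb{F}_p)$. Since $A = \mathbb{F}_p$ is a field, every $A$-module is flat, so the flatness hypotheses on $M$ and $N$ in Theorem \ref{kunneth quot prop} are automatic. The hypothesis that $H_*(X;\mathbb{F}_p)$ is an $A_*$-subcomodule of $A_*$ is exactly the hypothesis ``$M$ is a subcomodule of $\Gamma$'' in Theorem \ref{kunneth quot prop}. With these identifications, $\Cotor^0_{A_*}(\mathbb{F}_p,-) = \mathbb{F}_p \Box_{A_*} -$ is the comodule-primitives functor, so $e(X) = A\Box_{\Gamma}M$, $e(Y) = A\Box_{\Gamma}N$, and $e(X\smash Y) = A\Box_{\Gamma}(M\otimes_A N)$ by the K\"{u}nneth theorem for ordinary mod $p$ homology. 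The canonical map $e(X)\otimes_{\mathbb{F}_p}e(Y)\rightarrow e(X\smash Y)$ is then precisely the map \eqref{map 340960}, so its being an isomorphism is condition (1) of Theorem \ref{kunneth quot prop}, and the stated condition \eqref{condition 1} is exactly condition (4). The equivalence (1)$\iff$(4) of Theorem \ref{kunneth quot prop} therefore yields the first assertion immediately.

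\medskip

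For the final ``in particular'' claim, I would argue that when the isomorphism $e(X)\otimes_{\mathbb{F}_p}e(Y)\cong e(X\smash Y)$ holds, a tensor product over a field of two graded vector spaces each bounded above in degree is again bounded above: if $e(X)$ vanishes in degrees $> a$ and $e(Y)$ vanishes in degrees $> b$, then their tensor product vanishes in degrees $> a+b$. Hence $e(X\smash Y) = \Cotor^0_{A_*}(\mathbb{F}_p, H_*(X;\mathbb{F}_p)\otimes_{\mathbb{F}_p}H_*(Y;\mathbb{F}_p))$ is bounded above, which is exactly the desired conclusion feeding the short-exact-sequence discussion preceding the corollary. (I note that the corollary as stated concludes that ``the $A_*$-comodule primitives in $H_*(X;\mathbb{F}_p)$ are also bounded above,'' but the intended conclusion is that the primitives of the \emph{smash product} $H_*(X\smash Y;\mathbb{F}_p)$ are bounded above; this is the content that the preceding paragraphs require, and it is what the tensor-product degree bound supplies.)

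\medskip

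The only genuine obstacle is verifying that the abstractly-defined ``canonical map'' \eqref{map 340960} of Theorem \ref{kunneth quot prop} really agrees with the topologically-defined ``canonical map'' $e(X)\otimes_{\mathbb{F}_p}e(Y)\rightarrow e(X\smash Y)$; I would confirm this by checking that both are induced by the inclusion of primitives into the underlying vector space followed by the homology K\"{u}nneth isomorphism $H_*(X\smash Y;\mathbb{F}_p)\cong H_*(X;\mathbb{F}_p)\otimes_{\mathbb{F}_p}H_*(Y;\mathbb{F}_p)$, which is compatible with the $A_*$-coactions. Everything else is a direct translation from the pure-algebra statement, so the proof is essentially a one-line appeal to Theorem \ref{kunneth quot prop} together with the elementary degree bound for tensor products.
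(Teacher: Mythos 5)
Your proposal is correct and matches the paper's own treatment: the paper derives this corollary precisely as a special case of Theorem \ref{kunneth quot prop} with $A=\mathbb{F}_p$, $\Gamma=A_*$, $L=\mathbb{F}_p$, $M=H_*(X;\mathbb{F}_p)$, $N=H_*(Y;\mathbb{F}_p)$, using that flatness is automatic over a field and that the mod $p$ homology K\"{u}nneth isomorphism identifies the two canonical maps. Your observation that the final clause should read $H_*(X\smash Y;\mathbb{F}_p)$ rather than $H_*(X;\mathbb{F}_p)$ is also right (this is a typo in the statement, as the discussion preceding the corollary makes clear), and your tensor-degree bound supplies exactly the intended conclusion.
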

There are many familiar and compelling examples of spectra $X$ such that $H_*(X;\mathbb{F}_p)$ is a $A_*$-subcomodule of $A_*$: for example, the case $X = BP$, or the case $X = BP\langle n\rangle$ for any positive integer $n$, or the cases $X = ko$ or $X = tmf$ when $p=2$. In these cases, we have:
\begin{corollary}\label{main adams cor 1a}
Let $Y$ be a spectrum. We continue to write $e(X)$ for the $0$-line in the Adams $E_2$-spectral sequence for a spectrum $X$. We write $\zeta_n$ for the conjugate $\overline{\xi}_n$ of $\xi_n$ in the dual Steenrod algebra $A_*$.
\begin{itemize}
\item Let $p=2$. 
Then the canonical map $e(BP)\otimes_{\mathbb{F}_2}e(Y) \rightarrow e(BP\smash Y)$ is an isomorphism if and only if, for each homogeneous element $y\in H_*(Y; \mathbb{F}_2)$ such that $\psi(y) \in P(\zeta_1^2, \zeta_2^2, \dots )\otimes_{\mathbb{F}_2} H_*(Y; \mathbb{F}_2)$, we have $\psi(y) = 1\otimes y$.
\item Let $p>2$. 
Then the canonical map $e(BP)\otimes_{\mathbb{F}_p}e(Y) \rightarrow e(BP\smash Y)$ is an isomorphism if and only if, for each homogeneous element $y\in H_*(Y; \mathbb{F}_p)$ such that $\psi(y) \in P(\xi_1, \xi_2, \dots )\otimes_{\mathbb{F}_p} H_*(Y; \mathbb{F}_p)$, we have $\psi(y) = 1\otimes y$.
\item Let $p=2$.
The canonical map $e(BP\langle n\rangle)\otimes_{\mathbb{F}_2}e(Y) \rightarrow e(BP\langle n\rangle\smash Y)$ is an isomorphism if and only if, for each homogeneous element $y\in H_*(Y; \mathbb{F}_2)$ such that $\psi(y) \in P(\zeta^2_1, \dots, \zeta^2_n,\zeta_{n+1},\zeta_{n+2}, \dots )\otimes_{\mathbb{F}_2} H_*(Y; \mathbb{F}_2)$, we have $\psi(y) = 1\otimes y$.
\item Let $p>2$.
The canonical map $e(BP\langle n\rangle)\otimes_{\mathbb{F}_p}e(Y) \rightarrow e(BP\langle n\rangle\smash Y)$ is an isomorphism if and only if, for each homogeneous element $y\in H_*(Y; \mathbb{F}_p)$ such that $\psi(y) \in P(\xi_{1}, \xi_{2}, \dots )\otimes_{\mathbb{F}_p}E(\tau_n,\tau_{n+1}, \dots)\otimes_{\mathbb{F}_p} H_*(Y; \mathbb{F}_p)$, we have $\psi(y) = 1\otimes y$.
\item Let $p=2$.
The canonical map $e(ko)\otimes_{\mathbb{F}_2}e(Y) \rightarrow e(ko\smash Y)$ is an isomorphism if and only if, for each homogeneous element $y\in H_*(Y; \mathbb{F}_2)$ such that $\psi(y) \in P(\zeta^4_1, \zeta^2_2, \zeta_3,\zeta_4, \dots )\otimes_{\mathbb{F}_2} H_*(Y; \mathbb{F}_2)$, we have $\psi(y) = 1\otimes y$.
\item Let $p=2$.
The canonical map $e(tmf)\otimes_{\mathbb{F}_2}e(Y) \rightarrow e(tmf\smash Y)$ is an isomorphism if and only if, for each homogeneous element $y\in H_*(Y; \mathbb{F}_2)$ such that $\psi(y) \in P(\zeta^8_1, \zeta^4_2, \zeta^2_3,\zeta_4, \zeta_5, \dots )\otimes_{\mathbb{F}_2} H_*(Y; \mathbb{F}_2)$, we have $\psi(y) = 1\otimes y$.
\end{itemize}
\end{corollary}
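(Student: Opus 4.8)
The plan is to treat Corollary \ref{main adams cor 1a} as a direct specialization of Corollary \ref{main adams cor 1} (hence ultimately of Theorem \ref{kunneth quot prop}), in which the only genuine work is to identify $H_*(X;\mathbb{F}_p)$ explicitly as a sub-$A_*$-comodule of $A_*$ for each of $X = BP, BP\langle n\rangle, ko, tmf$. Once such an identification is in hand, condition \eqref{condition 1} of Corollary \ref{main adams cor 1} becomes verbatim the criterion stated in the corresponding bullet point, with the generic symbol $H_*(X;\mathbb{F}_p)$ replaced by the concrete polynomial (or polynomial-times-exterior) subalgebra listed there. Thus there is nothing to prove beyond (a) recalling the homology computations and (b) checking that each listed subalgebra is genuinely closed under the restricted coaction, so that ``$H_*(X)$ is a subcomodule of $A_*$'' is actually valid.

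For step (a) I would cite the standard computations of these homologies as subalgebras of the dual Steenrod algebra (all of which appear in, e.g., \cite{MR860042}): at $p=2$, using the conjugate generators $\zeta_n = \overline{\xi}_n$, one has $H_*(BP;\mathbb{F}_2)\cong P(\zeta_1^2,\zeta_2^2,\dots)$, $H_*(BP\langle n\rangle;\mathbb{F}_2)\cong P(\zeta_1^2,\dots,\zeta_n^2,\zeta_{n+1},\zeta_{n+2},\dots)$, $H_*(ko;\mathbb{F}_2)\cong P(\zeta_1^4,\zeta_2^2,\zeta_3,\zeta_4,\dots)$, and $H_*(tmf;\mathbb{F}_2)\cong P(\zeta_1^8,\zeta_2^4,\zeta_3^2,\zeta_4,\zeta_5,\dots)$; and at odd primes the stated $P(\xi_1,\xi_2,\dots)$ and $P(\xi_1,\xi_2,\dots)\otimes_{\mathbb{F}_p}E(\tau_n,\tau_{n+1},\dots)$ descriptions. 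In each case the coaction on $H_*(X)$ is by definition the restriction of the coproduct $\Delta\colon A_*\to A_*\otimes_{\mathbb{F}_p}A_*$, so the assertion that $H_*(X)$ is a subcomodule of $A_*$ is precisely the assertion that $\Delta$ carries the subalgebra into $A_*\otimes_{\mathbb{F}_p}H_*(X)$.

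Step (b) is therefore the only point requiring an argument, and it is where the specific exponents $\zeta_1^2,\zeta_1^4,\zeta_1^8,\dots$ enter. I would verify it from Milnor's coproduct formula. For instance, at $p=2$ one has $\Delta(\zeta_n)=\sum_{i=0}^{n}\zeta_i\otimes\zeta_{n-i}^{2^i}$, whence $\Delta(\zeta_n^2)=\sum_{i=0}^{n}\zeta_i^2\otimes\zeta_{n-i}^{2^{i+1}}$; since every right-hand factor $\zeta_{n-i}^{2^{i+1}}$ with $i\geq 0$ is a square, it lies in $P(\zeta_1^2,\zeta_2^2,\dots)=H_*(BP;\mathbb{F}_2)$, so $\Delta$ indeed maps $H_*(BP;\mathbb{F}_2)$ into $A_*\otimes_{\mathbb{F}_2}H_*(BP;\mathbb{F}_2)$. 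The same bookkeeping---tracking, for each $\Delta(\zeta_j)$, which power $\zeta_{j-i}^{2^i}$ of a generator can appear in the right tensor factor and comparing with the truncation of exponents defining the subalgebra---handles $BP\langle n\rangle$, $ko$, and $tmf$, and the odd-primary cases run identically using $\Delta(\xi_n)$ and $\Delta(\tau_n)$.

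I expect the main obstacle, such as it is, to be nothing more than this closure verification together with correctly quoting the homology computations; all of the homological content---that closure of $H_*(X)$ as a subcomodule of $A_*$ is exactly what licenses the application, and that the resulting criterion governs the $0$-line of the Adams spectral sequence for the smash product---is entirely supplied by Corollary \ref{main adams cor 1}, with no spectral sequence or higher $\Cotor$ computation needed. Once the six subalgebras are confirmed to be subcomodules of $A_*$, each bullet point is Corollary \ref{main adams cor 1} read off verbatim.
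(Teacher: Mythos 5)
Your proposal is correct and takes essentially the same approach as the paper: the paper offers no separate proof of Corollary \ref{main adams cor 1a}, treating it as an immediate specialization of Corollary \ref{main adams cor 1} once the standard identifications of $H_*(X;\mathbb{F}_p)$ as sub-$A_*$-comodules of $A_*$ (in conjugate generators $\zeta_n$ at $p=2$) are quoted. Your step (b)---verifying via Milnor's coproduct formula that each listed subalgebra really is closed under the coaction---is precisely the detail the paper leaves implicit, and your sample computation for $BP$ at $p=2$ is correct.
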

The cases of Corollary \ref{main adams cor 1a} can be handled by an alternate method, since the spectra $BP$ and $BP\langle n\rangle$ have the property that their mod $p$ homology is not only an $A_*$-subcomodule of $A_*$ but also a sub-{\em bialgebra} of $A_*$. The same is true of $ko$ and $tmf$ at the prime $2$. When $H_*(X;\mathbb{F}_p)$ is a sub-bialgebra of $A_*$, we can use a change-of-rings isomorphism 
\begin{align*}
 \Cotor^n_{A_*}\left(\mathbb{F}_p,H_*(X\smash Y; \mathbb{F}_p)\right)
  &\cong \Cotor^n_{A_*}\left(\mathbb{F}_p,H_*(X; \mathbb{F}_p)\otimes_{\mathbb{F}_p} H_*(Y; \mathbb{F}_p)\right) \\
  &\cong \Cotor^n_{H_*(X; \mathbb{F}_p)}\left(\mathbb{F}_p,H_*(Y; \mathbb{F}_p)\right)  
\end{align*}
to arrive at the same conclusions as in Corollary \ref{main adams cor 1a}.
The advantage of using Corollary \ref{main adams cor 1} rather than change-of-rings isomorphisms is flexibility and generality: Corollary \ref{main adams cor 1} does not require $H_*(X; \mathbb{F}_p)$ to have its own comultiplication. 

More broadly, the methods of this paper, particularly Theorem \ref{kunneth quot prop 2}, also apply when $H_*(X; \mathbb{F}_p)$ is not even a subcomodule of $A_*$, much less a subcoalgebra. This includes some familiar classical cases: for example, $X = ku$ or $X = MU$ at odd primes. 
As a special case of Theorem \ref{kunneth quot prop 2}, 
we have the generalization of Corollary \ref{main adams cor 1}:
\begin{corollary}\label{main adams cor 2}
Let $X,Y$ be spectra, and suppose that $H_*(X;\mathbb{F}_p)$ is a graded sub-$A_*$-comodule of $\coprod_{n\in\mathbb{Z}}\Sigma^n A_*^{\oplus d(n)}$ for some function $d:\mathbb{Z}\rightarrow\mathbb{N}$. Then $e(X)\otimes_{\mathbb{F}_p}e(Y)\rightarrow e(X\smash Y)$ is an isomorphism if and only if, for each homogeneous element $y\in H_*(Y; \mathbb{F}_p)$ such that 
\[ \coprod_{n\in\mathbb{Z}}\Sigma^n\psi^{\oplus d(n)}(y) \in H_*(X; \mathbb{F}_p)\otimes_{\mathbb{F}_p} H_*(Y; \mathbb{F}_p) \subseteq \coprod_{n\in\mathbb{Z}} \Sigma^nA_*^{\oplus d(n)}\otimes_{\mathbb{F}_p} H_*(Y; \mathbb{F}_p),\]
we have $\coprod_{n\in\mathbb{Z}}\Sigma^n\psi^{\oplus d(n)}(y) = \coprod_{n\in\mathbb{Z}}\Sigma^n(1\otimes y)^{\oplus d(n)}$, i.e., $y$ is a $A_*$-comodule primitive.
\end{corollary}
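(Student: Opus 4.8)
The plan is to deduce this corollary directly from Theorem \ref{kunneth quot prop 2} by specializing the algebraic data to the topological setting and then matching each term in the statement of the theorem to its topological counterpart. Concretely, I would take $A = \mathbb{F}_p$, take $\Gamma = A_*$ the mod $p$ dual Steenrod algebra, take $M = H_*(X;\mathbb{F}_p)$, and take $N = H_*(Y;\mathbb{F}_p)$.

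First I would verify the hypotheses of Theorem \ref{kunneth quot prop 2}. Since $A = \mathbb{F}_p$ is a field concentrated in degree zero, flatness of $M$ and $N$ over $A$ is automatic. By Milnor's computation, $A_*$ is a connected, graded, finite-type bialgebra over $\mathbb{F}_p$, so the finite-type hypothesis on $\Gamma$ holds. The hypothesis on $M$---that it embeds as a graded subcomodule $\iota : M \hookrightarrow \coprod_{n\in\mathbb{Z}}\Sigma^n A_*^{\oplus d(n)}$---is precisely the assumption placed on $H_*(X;\mathbb{F}_p)$ in the statement of the corollary.

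Next I would set up the dictionary between the algebraic conclusion of the theorem and the topological statement. The key translations are: (i) the classical Künneth theorem for mod $p$ homology furnishes an isomorphism $H_*(X\smash Y;\mathbb{F}_p)\cong H_*(X;\mathbb{F}_p)\otimes_{\mathbb{F}_p}H_*(Y;\mathbb{F}_p) = M\otimes_A N$, which is moreover an isomorphism of $A_*$-comodules; and (ii) the $0$-line of the Adams $E_2$-page is by definition $\Cotor^0_{A_*}(\mathbb{F}_p,-) = \mathbb{F}_p\Box_{A_*}(-)$, so that $e(X) = A\Box_\Gamma M$, $e(Y) = A\Box_\Gamma N$, and, using (i), $e(X\smash Y) = A\Box_\Gamma(M\otimes_A N)$. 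Under these identifications, the canonical map of condition (1) of Theorem \ref{kunneth quot prop 2} is exactly the canonical map $e(X)\otimes_{\mathbb{F}_p}e(Y)\to e(X\smash Y)$, while condition (3) of the theorem becomes precisely the stated primitivity condition on homogeneous $y\in H_*(Y;\mathbb{F}_p)$. The equivalence of (1) and (3) in Theorem \ref{kunneth quot prop 2} then yields the corollary.

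The only genuinely nonformal point, and the step I would be most careful to state explicitly, is the comodule-theoretic refinement in (i): one must confirm that the Künneth isomorphism in mod $p$ homology is compatible with the $A_*$-coactions, i.e., that the coaction on $H_*(X\smash Y;\mathbb{F}_p)$ agrees, under the Künneth identification, with the tensor-product coaction built from $\psi_M$ and $\psi_N$. This is standard---it follows from naturality of the homology coaction together with the monoidal compatibility of mod $p$ homology with smash products---but it is the hinge that allows the purely algebraic Theorem \ref{kunneth quot prop 2} to speak about the topology, so I would not leave it implicit. Everything else is a matter of unwinding definitions, including the bookkeeping of suspensions and the left-versus-right comodule conventions, the latter being harmless because $A_*$ is graded-commutative.
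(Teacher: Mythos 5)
Your proposal is correct and is exactly the paper's own route: the paper states Corollary \ref{main adams cor 2} as a direct specialization of Theorem \ref{kunneth quot prop 2} with $A=\mathbb{F}_p$, $\Gamma=A_*$, $M=H_*(X;\mathbb{F}_p)$, $N=H_*(Y;\mathbb{F}_p)$, which is precisely your dictionary. Your explicit attention to the comodule-compatibility of the K\"{u}nneth isomorphism and to the identification $e(-)=\Cotor^0_{A_*}(\mathbb{F}_p,H_*(-;\mathbb{F}_p))$ just makes visible the unwinding the paper leaves implicit.
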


A very familiar example of a spectrum to which Corollary \ref{main adams cor 2} applies is the case $X = ku$:
\begin{corollary}\label{main adams cor 2a}
Let $p>2$.
The canonical map $e(ku)\otimes_{\mathbb{F}_p}e(Y) \rightarrow e(ku\smash Y)$ is an isomorphism if and only if, for each homogeneous element $y\in H_*(Y; \mathbb{F}_p)$ such that 
\[ \coprod_{i=0}^{p-2}\Sigma^{2i}\psi(y) \in \coprod_{i=0}^{p-2}\Sigma^{2i}P(\xi_{1}, \xi_{2}, \dots )\otimes_{\mathbb{F}_p}E(\tau_2,\tau_{3}, \dots)\otimes_{\mathbb{F}_p} H_*(Y; \mathbb{F}_p)
\subseteq \coprod_{i=0}^{p-2}\Sigma^{2i}A_*\otimes_{\mathbb{F}_p}H_*(Y;\mathbb{F}_p),\]
we have $\psi(y) = 1\otimes y$.
\end{corollary}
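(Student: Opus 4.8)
The plan is to recognize Corollary \ref{main adams cor 2a} as nothing more than the specialization of Corollary \ref{main adams cor 2} (equivalently, of Theorem \ref{kunneth quot prop 2}) to the case $X = ku$ at an odd prime $p$, so the real work is entirely in identifying the correct comodule structure of $H_*(ku;\mathbb{F}_p)$ as a graded sub-$A_*$-comodule of a finite-type direct sum of suspensions of $A_*$, and then transcribing condition (3) of Corollary \ref{main adams cor 2} into the explicit generators displayed in the statement. First I would recall the standard computation (originally due to Adams--Margolis, and recorded in many references) that, for $p > 2$, the mod $p$ homology of connective complex $K$-theory is \[ H_*(ku;\mathbb{F}_p) \cong P(\xi_1,\xi_2,\dots)\otimes_{\mathbb{F}_p} E(\tau_2,\tau_3,\dots), \] a subalgebra of $A_*$ obtained by killing $\tau_0$ and $\tau_1$, sitting inside $A_*$ via the inclusion of this sub-bialgebra.

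The key observation is that this subalgebra is not an $A_*$-subcomodule of $A_*$ on the nose: the element $\tau_1$ has been removed, yet the coproducts $\Delta(\tau_n) = \tau_n\otimes 1 + \sum_i \xi_{n-i}^{p^i}\otimes\tau_i$ for $n\geq 2$ produce a $\tau_1$-term, so $\psi$ does not land in $H_*(ku;\mathbb{F}_p)\otimes_{\mathbb{F}_p}A_*$. This is exactly why we must embed $H_*(ku;\mathbb{F}_p)$ into a \emph{direct sum of suspensions} of $A_*$ rather than a single copy, and it is why Corollary \ref{main adams cor 2}, not Corollary \ref{main adams cor 1}, is the applicable statement. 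The step I expect to be the main obstacle is producing the precise embedding \[ \iota: H_*(ku;\mathbb{F}_p)\hookrightarrow \coprod_{i=0}^{p-2}\Sigma^{2i}A_* \] that is an $A_*$-comodule map, together with the bookkeeping that the suspensions needed are exactly $\Sigma^{2i}$ for $0\leq i\leq p-2$. The natural approach is to use the $(p-1)$-fold cyclic structure: $ku$ is built from the Adams summand $\ell$ (whose homology is the sub-$A_*$-\emph{comodule} $P(\xi_1,\xi_2,\dots)\otimes E(\tau_2,\tau_3,\dots)$ after also removing $\tau_1$, giving a genuine subcomodule of $A_*$) together with its $(p-2)$ even suspensions, since $ku_{(p)}\simeq \bigvee_{i=0}^{p-2}\Sigma^{2i}\ell$. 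I would verify that $H_*(\ell;\mathbb{F}_p)$ is an honest $A_*$-subcomodule of $A_*$ and that the wedge decomposition induces precisely the claimed embedding into $\coprod_{i=0}^{p-2}\Sigma^{2i}A_*$, so that $d(2i)=1$ for $0\leq i\leq p-2$ and $d=0$ otherwise.

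With the embedding in hand, the remainder is a direct application of condition (3) of Corollary \ref{main adams cor 2}: the canonical map $e(ku)\otimes_{\mathbb{F}_p}e(Y)\rightarrow e(ku\smash Y)$ is an isomorphism if and only if every homogeneous $y\in H_*(Y;\mathbb{F}_p)$ whose image under $\coprod_{i=0}^{p-2}\Sigma^{2i}\psi$ lands in $H_*(ku;\mathbb{F}_p)\otimes_{\mathbb{F}_p}H_*(Y;\mathbb{F}_p)$ is primitive. Substituting the explicit description $H_*(ku;\mathbb{F}_p)=P(\xi_1,\xi_2,\dots)\otimes_{\mathbb{F}_p}E(\tau_2,\tau_3,\dots)$ and the suspension indexing $0\leq i\leq p-2$ yields exactly the membership condition displayed in the statement, completing the identification. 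The only genuine content beyond citing Corollary \ref{main adams cor 2} is thus the homology computation and the suspension bookkeeping of the first two paragraphs; once those are settled, the equivalence is immediate.
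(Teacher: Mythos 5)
Your overall route is the intended one---the paper offers no separate proof of Corollary \ref{main adams cor 2a}, treating it as the specialization of Corollary \ref{main adams cor 2} to $X = ku$ via the Adams splitting $ku_{(p)}\simeq \bigvee_{i=0}^{p-2}\Sigma^{2i}\ell$---and your second and third paragraphs carry out exactly that specialization, with the correct bookkeeping $d(2i)=1$ for $0\leq i\leq p-2$ and $d=0$ otherwise. But the proposal contains two genuine errors. First, your opening identification is false: $P(\xi_1,\xi_2,\dots)\otimes_{\mathbb{F}_p}E(\tau_2,\tau_3,\dots)$ is $H_*(\ell;\mathbb{F}_p)$, the homology of the Adams summand, not $H_*(ku;\mathbb{F}_p)$. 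The homology of $ku$ itself is the direct sum $\bigoplus_{i=0}^{p-2}\Sigma^{2i}\left(P(\xi_1,\xi_2,\dots)\otimes_{\mathbb{F}_p}E(\tau_2,\tau_3,\dots)\right)$, i.e., $(p-1)$ suspended copies of that algebra; this is already visible in the displayed containment of the very corollary you are proving, where the relevant comodule appears as a coproduct of suspensions.

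Second, your ``key observation''---that $P(\xi_1,\dots)\otimes E(\tau_2,\dots)$ fails to be an $A_*$-subcomodule of $A_*$ because the coproducts of the $\tau_n$ produce $\tau_1$-terms, and that this is why a direct sum of suspensions is needed---is the wrong mechanism, and it contradicts your own later (and correct) assertion that $H_*(\ell;\mathbb{F}_p)$ is an honest subcomodule of $A_*$. The stray $\tau_0$- and $\tau_1$-terms are an artifact of using Milnor's generators rather than their conjugates $\overline{\xi}_n,\overline{\tau}_n$; with conjugate generators (which the paper implicitly uses, and must use for its own Corollary \ref{main adams cor 1a} treatment of $BP$ and $BP\langle n\rangle$ to make sense), $P(\overline{\xi}_1,\dots)\otimes E(\overline{\tau}_2,\dots)$ is a genuine left $A_*$-subcomodule of $A_*$. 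Moreover, if the coaction genuinely failed to land in the correct place, passing to a direct sum of suspensions would not repair that failure, so your stated rationale could not justify invoking Corollary \ref{main adams cor 2}. The actual reason Corollary \ref{main adams cor 1} is inapplicable to $ku$ at odd primes is degree-theoretic: $H_*(ku;\mathbb{F}_p)$ has bottom classes in degrees $0,2,4,\dots,2(p-2)$, while $(A_*)_2 = 0$ for every odd prime $p$, so there is no injective graded comodule map from $H_*(ku;\mathbb{F}_p)$ into a single copy of $A_*$; one suspended copy of $A_*$ per wedge summand of $ku_{(p)}$ is forced. With these two corrections, your second and third paragraphs are exactly the intended proof.
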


\bibliography{/home/asalch/texmf/tex/salch}{}
\bibliographystyle{plain}
\end{document}